\documentclass[12pt]{amsproc}

\usepackage[T1]{fontenc}
\usepackage{lmodern}
\usepackage{microtype}
\usepackage{amsmath,amssymb,amsthm,mathtools}
\usepackage[backref=page,colorlinks=true,linkcolor=blue,citecolor=blue,urlcolor=blue]{hyperref}
\usepackage[a4paper,margin=32mm]{geometry}

\numberwithin{equation}{section}

\newtheorem{theorem}{Theorem}[section]
\newtheorem{proposition}[theorem]{Proposition}
\newtheorem{lemma}[theorem]{Lemma}

\newtheorem*{centralizerproblem}{The centralizer problem}
\theoremstyle{definition}
\newtheorem{definition}[theorem]{Definition}
\theoremstyle{remark}
\newtheorem{remark}[theorem]{Remark}

\newcommand{\M}{\mathcal M}
\newcommand{\U}{\mathcal U}
\newcommand{\tr}{\operatorname{tr}}
\newcommand{\Tr}{\operatorname{Tr}}
\newcommand{\dist}{\operatorname{dist}}

\newcommand{\EL}{\operatorname{EL}}

\newcommand{\id}{\mathrm{id}}

\title[A conditional construction of a nonhyperlinear group]
{A conditional construction of a nonhyperlinear group
and the centralizer problem}

\author{Andreas Thom}
\address{A.T., Institut f\"ur Geometrie, TU Dresden,
01062 Dresden, Germany}
\email{andreas.thom@tu-dresden.de}

\hypersetup{
  pdftitle={A conditional construction of a nonhyperlinear group and the centralizer problem},
  pdfauthor={Andreas Thom},
  pdfsubject={A reduction of the existence of a nonhyperlinear group to the centralizer problem},
  pdfkeywords={hyperlinear group, property (T), tracial ultraproduct, relative commutant, infranormal subgroup}
}

\subjclass[2020]{20F65, 20F69, 46L10, 22D55}
\keywords{Hyperlinear group, property (T), tracial ultraproduct, relative commutant, infranormal subgroup}

\begin{document}

\begin{abstract}
We show that a positive answer to the centralizer problem for tracial
matrix ultraproducts implies the existence of a finitely generated
nonhyperlinear group.
\end{abstract}

\maketitle

\tableofcontents

\section{Introduction}

In his 1976 classification paper,
Connes suggested that every separable $\mathrm{II}_1$ factor should embed
into an ultrapower $R^\omega$ of the hyperfinite $\mathrm{II}_1$ factor
\cite{Connes76}.  Kirchberg subsequently placed this question in a broad
circle of equivalences involving tensor products and the QWEP problem for
$C^*$-algebras
\cite{Kirchberg93}.  Work relating the embedding problem to Tsirelson's
problem connected it with finite-dimensional quantum correlations
\cite{JungeEtAl11}.  The theorem $\mathrm{MIP}^*=\mathrm{RE}$ of Ji,
Natarajan, Vidick, Wright, and Yuen ultimately disproved the Connes
embedding problem \cite{MIPRE}.

The corresponding group-theoretic problem remains open, while a recent breakthrough by OpenAI has produced the first example of a nonsofic group \cite{OAI}, see also \cite{KT}.  R\u{a}dulescu
introduced hyperlinear groups \cite{Radulescu08}: a countable group is
hyperlinear if it embeds into the unitary group of a tracial ultraproduct
of matrix algebras.  Elek and Szab\'o proved that every sofic group is
hyperlinear \cite{ElekSzabo05}.  Although the general Connes embedding
problem has a negative answer, no nonhyperlinear group is known.  

The study of sofic groups is in a sense more approachable, since it allows for input from finite combinatorics.  Kun
proved that every sofic approximation of a Kazhdan group is, up to a negligible
modification, a disjoint union of expanders \cite{Kun16}.  Building on
this decomposition, Kun and the author showed that the sufficiently
accurate almost automorphisms of such an expander sofic approximation
form a group in a natural way \cite{KunThom19}.  Alekseev and the author
subsequently proved that, after passing to an essentially equivalent
finite model, the centralizer of a sofic embedding of a Kazhdan group in
a universal sofic group is itself an internal metric ultraproduct of finite permutation
groups \cite{ATC}.  Motivated by these results, they formulated the
following matrix analogue \cite[Open Problem~6.2(a)]{ATC}, which we call
the centralizer problem.  

\begin{centralizerproblem}\leavevmode\par\noindent
Let $G$ be a Kazhdan group and let
$\pi\colon G\to U(\prod_{n\to\U}M_{d_n}(\mathbb C))$ be a
homomorphism.  Do there exist finite-dimensional $*$-subalgebras
$A_n\subseteq M_{d_n}(\mathbb C)$ such that
\[
 \pi(G)'\cap\prod_{n\to\U}M_{d_n}(\mathbb C)
 =\prod_{n\to\U}A_n?
\]
\end{centralizerproblem}

\begin{remark}
The formulation in \cite[Open Problem~6.2(a)]{ATC} allows first replacing
$d_n$ by $m_n$, where $m_n/d_n\to_{\U}1$.  This flexibility is in fact
unnecessary for the internality assertion above, by
Lemma~\ref{lem:dimension-invariance} below.  This contrasts with the situation for sofic approximations and with
the correction of approximate representations in normalized Hilbert--Schmidt
norm, for which a change of dimension can be essential; see Becker and
Lubotzky \cite[Appendix~A]{BL}.
\end{remark}

The purpose of this paper is to show that a
positive answer to this problem implies the existence of a finitely generated
nonhyperlinear group. The construction announced by OpenAI \cite{OAI}
inspired Kun and the author to isolate the mechanism used in the proof of \cite[Proposition 2.3]{OAI} in \cite{KT}.
For a subgroup $H<G$, put $P_H=\{t\in G:tHt^{-1}\leq H\}$, the
so-called \emph{compression semigroup} of $H$ in $G$.
Following \cite{KT}, we call $H$ \emph{infranormal} in $G$ if $P_H$
generates $G$.  
They proved that if $H<G$ are Kazhdan, $H$ is infranormal and
nonnormal, then both $G*_H(H\times\mathbb Z/2\mathbb Z)$ and the group
double $G*_H G$ are nonsofic \cite{KT}.  The key representation-theoretic
statement is that the
centralizer of the image of $H$ in a universal sofic group is normalized
by the image of $G$; see \cite[Theorem~4.1]{KT}.

We prove a matrix counterpart to this result, conditional on a positive answer
to the centralizer problem.

\begin{theorem}
\label{thm:normalization-intro}
Assume that the centralizer problem has a positive answer.  Let $H<G$ be
Kazhdan groups and suppose that $H$ is infranormal in $G$.  If
$\pi\colon G\to U(\M)$ is a homomorphism into a tracial matrix
ultraproduct, then
\[
 \pi(g)(\pi(H)'\cap\M)\pi(g)^{-1}=\pi(H)'\cap\M
\]
for every $g\in G$.
\end{theorem}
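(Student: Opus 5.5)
Put $N=\pi(H)'\cap\M$. The plan is to show that every element of the compression semigroup $P_H$ normalizes $N$, and then use infranormality. The set $\{g\in G:\pi(g)N\pi(g)^{-1}=N\}$ is a subgroup of $G$. So if it contains $P_H$, it contains the group generated by $P_H$, which is $G$ because $H$ is infranormal. Everything therefore reduces to proving $\pi(t)N\pi(t)^{-1}=N$ for each $t\in P_H$.

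Fix $t\in P_H$ and put $u=\pi(t)$. Since $tHt^{-1}\le H$, we have $u\pi(H)u^{*}\subseteq\pi(H)$. Taking relative commutants reverses this inclusion:
\[
 N=\pi(H)'\cap\M\subseteq \bigl(u\pi(H)u^*\bigr)'\cap\M = u\bigl(\pi(H)'\cap\M\bigr)u^* = uNu^*.
\]
So one inclusion is automatic, and the task is to rule out the situation where $N$ is a proper subalgebra of its own unitary conjugate. This is where the centralizer problem enters. Because $H$ is Kazhdan, we may apply it to $\pi|_H$ and get $N=\prod_{n\to\U}A_n$ with $A_n\subseteq M_{d_n}(\mathbb C)$ finite-dimensional $*$-subalgebras. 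Lifting $u$ to unitaries $u_n$, we also get $uNu^*=\prod_{n\to\U}u_nA_nu_n^*$. Thus we have two internal algebras $B=\prod B_n\subseteq C=\prod C_n$ with $C_n=u_nB_nu_n^*$ isomorphic to $B_n$ for every $n$.

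The remaining claim is the following: if internal algebras $B\subseteq C$ satisfy $C_n\cong B_n$ levelwise via an inner automorphism, then $B=C$. First I will perturb the approximate levelwise inclusion into an exact one. That is, I will find unitaries $v_n$ with $\|v_n-1\|_2\to_\U 0$ and $B_n\subseteq v_nC_nv_n^*$ for $\U$-almost every $n$. Then $\prod_{n\to\U}v_nC_nv_n^*=C$, so we may assume $B_n\subseteq C_n$ exactly. Next comes a finite-dimensional count. $B_n$ and $C_n$ are unital subalgebras of $M_{d_n}(\mathbb C)$ of the same dimension, since they are unitarily conjugate. One is contained in the other, so they coincide. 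Hence $B=C$, that is, $uNu^*=N$, and the proof is finished.

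The step I expect to be the main obstacle is the perturbation lemma, which turns an inclusion of ultraproducts into exact inclusions at the finite levels after a $2$-norm-small unitary correction. A priori, $B\subseteq C$ only says that elements of $B_n$ are $2$-norm close to $C_n$ along $\U$. To handle this, I would first try a Christensen/Popa-type argument in the $2$-norm. Work with the conditional expectation $E_{C_n}$ restricted to a generating family of matrix units of $B_n$. Average over the unitary group of $B_n$ to obtain an element $x_n$ that is close to $1$ and satisfies $B_n x_n\subseteq x_n C_n$. Then take the polar part to obtain $v_n$. The key point is that all estimates must be independent of the size of $B_n$. This uniformity is exactly what averaging over the compact unitary group of $B_n$ provides.

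If this perturbation step turns out to be delicate, my fallback is to compare the traces of minimal central projections in $B_n$ and $C_n$ together with their multiplicities. Since these data coincide for conjugate algebras, a $2$-norm inclusion should force the central projections of $C_n$ to be approximately sums of those of $B_n$, and then equality of the data would give $E_{C_n}\approx E_{B_n}$ in the appropriate sense.
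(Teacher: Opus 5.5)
\textbf{There is a genuine gap.} Your reduction to a single compressor and the automatic inclusion $N\subseteq uNu^*$ are correct. The claim that carries the rest of the argument is false, however: an internal algebra can be properly contained in a unitary conjugate of itself. This is the drift example of Remark~\ref{rem:drift-example}. There $A=\prod_{n\to\U}A_n$ with $A_n=\bigoplus_{i=1}^{n}M_{2^i}(\mathbb C)\otimes1_{2^{n-i}}$, and the cyclic permutation $w=(w_n)_{n\to\U}$ of the summand spaces satisfies $wAw^*=\prod_{n\to\U}w_nA_nw_n^*\subsetneq A$. So $B=wAw^*$, $C=A$ (equivalently $N=A$, $u=w^*$) meets all your hypotheses, yet $B\neq C$.

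Consequently your perturbation lemma cannot hold. Unitaries $v_n$ with $\|v_n-1\|_2\to_{\U}0$ and $w_nA_nw_n^*\subseteq v_nA_nv_n^*$ would, by your dimension count, give $wAw^*=A$. The obstruction lives on a corner of trace $1/n$. The wrapped-around summand $M_{2^n}(\mathbb C)$ has trace $1/n$ but carries about three quarters of the linear dimension of $A_n$. So a defect that is negligible in $2$-norm is not negligible for counting dimensions. Averaging over $U(B_n)$ can at best produce an exact intertwiner on a corner of trace tending to one, and on the complementary corner you cannot keep both exactness and the isomorphism type. Deleting that corner does give an exact inclusion, but in every remaining summand it is the proper amplification $M_{2^{j-1}}(\mathbb C)\otimes1_2\subseteq M_{2^j}(\mathbb C)$. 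Likewise, Proposition~\ref{prop:bratteli-correction} reaches exact inclusions only by changing multiplicities, after which source and target are no longer isomorphic. Your fallback fails on the same example: $A_n$ and $w_nA_nw_n^*$ have identical block data, yet the inclusion matches block $i$ with block $i+1$, and $E_{A_n}$ and $E_{w_nA_nw_n^*}$ stay far apart.

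\textbf{What is missing} is a finite-level anchor, and this is where the Kazhdan property of $G$ must enter. Your argument uses only that $N$ is internal and $u^*Nu\subseteq N$, which the example shows is not enough. The paper applies the centralizer problem to $\pi|_G$ as well, obtaining $D=\pi(G)'\cap\M=\prod_{n\to\U}D_n$ with $D_n\subseteq A_n$. It fixes compressors $t_1,\ldots,t_m$ with $G=\langle H,t_1,\ldots,t_m\rangle$ and chooses representatives $u_{\ell,n}\in D_n'$ (Proposition~\ref{prop:coherent-models}), so that $D=A\cap\bigcap_\ell\{u_\ell\}'$.

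Theorem~\ref{thm:no-drift} then proceeds as follows.
\begin{enumerate}
\item The near inclusion $u_{\ell,n}^*A_nu_{\ell,n}\subset_{2,\varepsilon_n}A_n$ is corrected relative to $D_n$, and the block scales become ordered, $\Delta_n^-\leq\Delta_n^+$.
\item The $D_n$-conditional median produces bounded scales $x_n$ with $E_{D_n}(x_n)=\frac12$.
\item In the ultraproduct, $u_\ell^*xu_\ell\leq x$ together with equal traces gives $u_\ell^*xu_\ell=x$ for every $\ell$. Hence $x\in D$, and so $x=\frac12$.
\item This concentration forces the normalized trace of $1-R_n^{-1}$ to tend to zero, and Lemma~\ref{lem:reverse-scale-estimate} turns that into the reverse near inclusion.
\end{enumerate}
Since $x$ lands in $D$ only once it commutes with every $u_\ell$, the compressors cannot be treated one at a time as in your reduction.
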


The proof proceeds in two steps: we first prepare exact inclusions at the
finite-dimensional level, and then establish a phenomenon which we call
\emph{no-drift}.  Put $A=\pi(H)'\cap\M$ and $D=\pi(G)'\cap\M$.
Choose $t_1,\ldots,t_m\in P_H$ such that
$G=\langle H,t_1,\ldots,t_m\rangle$, and set $u_\ell=\pi(t_\ell)$.
Since $t_\ell Ht_\ell^{-1}\leq H$, one immediately has
$u_\ell^*Au_\ell\subseteq A$.  The centralizer problem gives aligned
internal models $D_n\subseteq A_n$, with representatives
$u_{\ell,n}\in D_n'$.  At finite level, the preceding inclusion becomes
the one-sided near inclusion
$A_n^-=u_{\ell,n}^*A_nu_{\ell,n}\subset_{2,\varepsilon_n}A_n^+=A_n$.
The relative correction theorem replaces it by an exact inclusion
$\widehat A_n^-\subseteq\widehat A_n^+$, with compatible identifications
of $D_n$ on asymptotically full corners.

Assign to a block $M_{r_i}(\mathbb C)\otimes1_{s_i}$ the scale $r_i/s_i$.
For an exact inclusion these scales are ordered, and a conditional-expectation
estimate bounds the reverse near-inclusion error in terms of the scale
quotient.  A $D_n$-conditional median gives a bounded transform with
conditional expectation $1/2$, and the common copy of $D_n$ allows the same
median on the source and target sides.  The scale order passes to the
ultraproduct, where equality of traces makes this transform invariant under
every $u_\ell$.  It therefore belongs to $D$ and equals $1/2$.  The resulting
concentration makes the conditional-expectation bound tend to zero, giving
the reverse near inclusion and hence $u_\ell^*Au_\ell=A$.

\medskip

The application is immediate once Theorem~\ref{thm:normalization-intro} is
available.

\begin{theorem}
\label{thm:main-intro}
Assume that the centralizer problem has a positive answer.  If $H < G$ is infranormal and not normal, and both groups are Kazhdan, then $G \ast_H G$ is not hyperlinear.

Moreover, let $q$ be
a prime power and let $r,d\geq3$.  Put
$R_+=\mathbb F_q[x_1,\ldots,x_d]$ and
$R=\mathbb F_q[x_1^{\pm1},\ldots,x_d^{\pm1}]$, and set
$H=\EL_r(R_+)$ and
$G=\EL_r(R)\rtimes\operatorname{SL}_d(\mathbb Z)$,
where $\operatorname{SL}_d(\mathbb Z)$ acts by monomial substitutions.  Then $H<G$ satisfies these assumptions.
\end{theorem}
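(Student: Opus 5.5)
We derive the first assertion of Theorem~\ref{thm:main-intro} from Theorem~\ref{thm:normalization-intro}, following the pattern of \cite{KT}. Suppose $\Gamma=G*_HG$ is hyperlinear. Write $\iota_1,\iota_2\colon G\to\Gamma$ for the two canonical embeddings, which agree on $H$. Fix an injective homomorphism $\rho\colon\Gamma\to U(\M)$, and set $\pi_j=\rho\circ\iota_j$ for $j=1,2$. Both $\pi_1$ and $\pi_2$ are homomorphisms $G\to U(\M)$, and $\pi_1|_H=\pi_2|_H$. Put $A=\rho(\iota(H))'\cap\M$, the common relative commutant. Theorem~\ref{thm:normalization-intro} applies to both $\pi_j$, so $\pi_1(g)$ and $\pi_2(g)$ both normalize $A$ for every $g\in G$.

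The next step is to produce a nontrivial element of $\Gamma$ that $\rho$ sends to $1$, contradicting injectivity. Since $H$ is not normal, pick $g\in G$ and $h\in H$ with $ghg^{-1}\notin H$. Consider the element
\[
 w=\iota_1(g)\iota_2(g)^{-1}.
\]
The aim is to show that $\rho(w)$ commutes with $\rho(\iota(ghg^{-1}))$, or some variant of this, and then to derive from normal forms in the amalgamated product a relation that fails in $\Gamma$. In \cite{KT} the substitute used is the element $\pi_1(g)\pi_2(g)^{-1}$. This element normalizes $A$ and acts trivially on $\pi(H)$ after conjugation; the decisive point is to show that it lies in $A''$ modulo $A$. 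That requires identifying $A'\cap\M$ with the von Neumann algebra generated by $\pi(H)$ "up to internal pieces". The centralizer problem makes this available: $A=\prod_{n\to\U}A_n$ with $A_n$ finite dimensional, so $A'\cap\M=\prod_{n\to\U}(A_n'\cap M_{d_n})$ by the bicommutant theorem applied level-wise. Hence $\rho(\iota(H))\subseteq A'\cap\M$, and this relative commutant is controlled.

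Concretely, the plan is as follows. First, $u=\pi_1(g)\pi_2(g)^{-1}$ normalizes $A$ and fixes $A$ pointwise? That second property is not automatic, so instead we exploit infranormality. For $t\in P_H$ we have $\pi_1(t)$ and $\pi_2(t)$ both conjugating $\pi(H)$ into $\pi(H)$ in the same way, because $tHt^{-1}\le H$ and the embeddings agree on $H$. Hence $\pi_1(t)^{-1}\pi_2(t)$ commutes with $\pi(H)$ and lies in $A$. The normalization theorem then lets us push these elements through the generators $t_\ell$ and conclude that $\pi_1(g)^{-1}\pi_2(g)\in A$ for all $g\in G$. Now take $h\in H$ with $g^{-1}hg\notin H$ for some $g$. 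Then $\pi_1(g^{-1}hg)$ and $\pi_2(g^{-1}hg)$ differ by conjugation by an element of $A$, which commutes with $\pi(h)$. This yields
\[
 \rho\bigl(\iota_1(g^{-1}hg)\bigr)=\rho\bigl(\iota_2(g^{-1}hg)\bigr),
\]
but $\iota_1(g^{-1}hg)\ne\iota_2(g^{-1}hg)$ in $G*_HG$ because the element lies outside $H$. This contradicts injectivity. The main obstacle is this cocycle step, namely showing that $c(g)=\pi_1(g)^{-1}\pi_2(g)\in A$ propagates from $P_H$ to all of $G$. We have $c(gk)=\pi_2(k)^{-1}c(g)\pi_2(k)\cdots$, and closure requires exactly that conjugation by $\pi_j(t_\ell)^{\pm1}$ preserves $A$. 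This holds by Theorem~\ref{thm:normalization-intro}, with the inverses handled by the equality rather than mere inclusion.

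For the example, verify the assumptions directly, since they are purely group-theoretic. Kazhdan's property (T) for $\EL_r(R_+)$ and $\EL_r(R)$ with $r\ge3$ follows from Ershov--Jaikin-Zapirain and Kassabov. Property (T) passes to the semidirect product with $\operatorname{SL}_d(\mathbb Z)$, which has (T) for $d\ge3$, by extension. Infranormality follows because $P_H$ contains the monomial substitutions given by nonnegative matrices in $\operatorname{SL}_d(\mathbb Z)$, which map $R_+$ into $R_+$; these generate $\operatorname{SL}_d(\mathbb Z)$ as a group, and together with $H$ they generate $\EL_r(R)$. Nonnormality holds because conjugating an elementary matrix $e_{12}(x_1)$ by the inverse of a positive substitution produces $e_{12}(x_1^{-1})\notin H$. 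This is the same example as in \cite{KT}.
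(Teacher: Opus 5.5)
Your argument for the first assertion is correct, but it is organized differently from the paper's.

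\textbf{The paper's route.} It first extracts a strict compressor $t\in P_H$, which exists because $H$ is infranormal but not normal. It observes that $t_1^{-1}t_2$ centralizes $H$. One application of Theorem~\ref{thm:normalization-intro} then gives $\pi(t_2t_1^{-1})=\pi(t_1)\pi(t_1^{-1}t_2)\pi(t_1)^{-1}\in A$. Finally, a normal-form check shows $[h,t_2t_1^{-1}]\neq1$ for $h\in H\setminus tHt^{-1}$.

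\textbf{Your route.} You show that $c(g)=\pi_1(g)^{-1}\pi_2(g)$ is $A$-valued on all of $G$. The set of such $g$ is a subgroup, because $c(gk)=\pi_1(k)^{-1}c(g)\pi_1(k)\,c(k)$ and $c(k^{-1})=\pi_1(k)c(k)^*\pi_1(k)^{-1}$, together with two-sided normalization. This subgroup contains the generating set $P_H$, so it is all of $G$. Then any $g$ with $x=g^{-1}hg\notin H$ gives $\rho(\iota_1(x))=\rho(\iota_2(x))$, whereas $\iota_1(x)\iota_2(x)^{-1}$ is a reduced word of length two.

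\textbf{Comparison.} The paper's commutator is, up to conjugation by $t_1$ and inversion, exactly $\iota_1(x)\iota_2(x)^{-1}$ with $x=t^{-1}ht$. So the paper's proof is the special case $g=t$ of yours. In that case $c(t^{-1})\in A$ needs a single normalization step and no cocycle propagation. Your version does not need a strict compressor, and it gives the stronger conclusion that $\rho\circ\iota_1$ and $\rho\circ\iota_2$ agree on the normal closure of $H$. The paper's version is more economical.

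\textbf{Two small points on this part.}
\begin{itemize}
\item In the last step, the element that must commute with $\pi(h)$ is $c(g^{-1})=\pi_1(g)\pi_2(g)^{-1}$, since $\pi_2(g^{-1}hg)=\pi_1(g)^{-1}c(g^{-1})\pi(h)c(g^{-1})^*\pi_1(g)$. Conjugation by $c(g)$ itself acts on $\pi_1(g^{-1}hg)$, with which it need not commute. This is harmless because you have $c$ $A$-valued on all of $G$.
\item The paragraph about $A''$ and levelwise bicommutants plays no role and should be deleted.
\end{itemize}

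\textbf{The example.} The paper just cites \cite[Theorem~E]{KT}. Your direct verification is fine in outline, but the non-normality witness is wrong as stated. Conjugating $e_{12}(x_1)$ by the inverse of a substitution $\sigma_A$ gives $e_{12}(\sigma_A^{-1}(x_1))$. This equals $e_{12}(x_1^{-1})$ only if $\sigma_A(x_1)=x_1^{-1}$, which is impossible when $\sigma_A(R_+)\subseteq R_+$. Either of the following works instead:
\begin{itemize}
\item $\operatorname{diag}(-1,-1,1,\ldots,1)\in\operatorname{SL}_d(\mathbb Z)$, which really does send $e_{12}(x_1)$ to $e_{12}(x_1^{-1})\notin H$;
\item the paper's strict compressor $t=(1,u_{12})$.
\end{itemize}

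Your generation claim also needs one more sentence. Every Laurent monomial is an $\operatorname{SL}_d(\mathbb Z)$-translate of a monomial in $R_+$, since $\operatorname{SL}_d(\mathbb Z)$ acts transitively on primitive vectors. Hence conjugates of elementary matrices in $H$ give all $e_{ij}(\lambda x^v)$, and therefore $\EL_r(R)$ lies in the group generated by $P_H$.
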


The pair in Theorem~\ref{thm:main-intro} is the explicit residually finite
Kazhdan pair constructed in \cite[Theorem~E]{KT}.

This paper is organized as follows.  Section~\ref{sec:internal-algebras}
collects the necessary facts about inclusions of internal finite-dimensional
algebras.  Section~\ref{sec:correction} proves a relative correction theorem
for one-sided near inclusions.  Section~\ref{sec:no-drift} proves the
no-drift theorem and explains why the common internal subalgebra is
essential.  Section~\ref{sec:main-results} applies the theorem to
normalization of Kazhdan centralizers and then to the conditional
nonhyperlinearity of a group double.

\section{Inclusions of internal algebras}
\label{sec:internal-algebras}

Fix a nonprincipal ultrafilter $\U$ on $\mathbb N$.  All matrix algebras
carry their normalized traces.  We write
$\M=\prod_{n\to\U}(M_{d_n}(\mathbb C),\tr_n)$ for the tracial
ultraproduct and $\|x\|_2=\tr_n(x^*x)^{1/2}$ at the
finite level.  If $A_n\subseteq M_{d_n}(\mathbb C)$ are unital
$*$-subalgebras, then $\prod_{n\to\U}A_n$ denotes their tracial
ultraproduct inside $\M$.  These are the usual metric ultraproducts of
finite tracial von Neumann algebras; for general background on von Neumann
algebra ultraproducts, see \cite{AndoHaagerup}.

\begin{definition}
A countable group $\Lambda$ is \emph{hyperlinear} if there are a
nonprincipal ultrafilter $\U$, integers $d_n$, and an injective
homomorphism $\Lambda\to U(\prod_{n\to\U}M_{d_n}(\mathbb C))$.
\end{definition}

For finite-dimensional subalgebras $B,A\subseteq M_d(\mathbb C)$, put
\[
 B\subset_{2,\varepsilon}A
 \quad\Longleftrightarrow\quad
 \sup_{b\in B,\ \|b\|\leq1}\dist_2(b,A)\leq\varepsilon.
\]
Let $E_A$ and $E_B$ be the trace-preserving conditional expectations,
and put $\|T\|_{\infty,2}=\sup_{\|x\|\leq1}\|T(x)\|_2$ for a linear
map $T$.  Then
\begin{equation}
 B\subset_{2,\varepsilon}A
 \quad\Longleftrightarrow\quad
 \|(1-E_A)E_B\|_{\infty,2}\leq\varepsilon.
 \label{eq:near-inclusion-expectations}
\end{equation}
Indeed, $E_A$ is the orthogonal projection onto $A$ in $L^2$, and the
image under $E_B$ of the ambient operator-norm unit ball is precisely the
operator-norm unit ball of $B$.  This is the classical notion of
$\varepsilon$-containment in $2$-norm, introduced by Murray and von Neumann
\cite{MvNIV} and developed by Christensen \cite{Christensen79}.  The
conditional-expectation formulation in
\eqref{eq:near-inclusion-expectations} is used systematically by
Popa, Sinclair, and Smith \cite{PSS}.

The following observation will be used repeatedly.

\begin{lemma}\label{lem:uniform-near-inclusion}
If $A_n,B_n\subseteq M_{d_n}(\mathbb C)$ and
$\prod_{n\to\U}B_n\subseteq\prod_{n\to\U}A_n$, then there are
$\varepsilon_n\to_{\U}0$ such that
$B_n\subset_{2,\varepsilon_n}A_n$.
\end{lemma}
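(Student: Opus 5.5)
We argue by contradiction, using the standard diagonal argument for ultraproducts. Define
\[
\varepsilon_n=\sup_{b\in B_n,\ \|b\|\leq1}\dist_2(b,A_n).
\]
The supremum is attained, since the unit ball of the finite-dimensional algebra $B_n$ is compact and $\dist_2(\cdot,A_n)$ is continuous. By definition $B_n\subset_{2,\varepsilon_n}A_n$ holds for every $n$, so it suffices to show that $\varepsilon_n\to_\U0$.

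Suppose instead that $\varepsilon_n\not\to_\U0$. Then there is $\delta>0$ such that the set $S=\{n:\varepsilon_n>\delta\}$ belongs to $\U$. For each $n\in S$, choose $b_n\in B_n$ with $\|b_n\|\leq1$ and $\dist_2(b_n,A_n)\geq\delta$. For $n\notin S$, set $b_n=0$. The sequence $(b_n)$ is uniformly bounded in operator norm, so it defines an element $b=(b_n)_\U\in\prod_{n\to\U}B_n$. By hypothesis $b\in\prod_{n\to\U}A_n$, so there is a bounded sequence $a_n\in A_n$ with $\|b_n-a_n\|_2\to_\U0$. This gives $\dist_2(b_n,A_n)\to_\U0$, which contradicts $\dist_2(b_n,A_n)\geq\delta$ on the set $S\in\U$.

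The one point that needs care is the meaning of membership $b\in\prod_{n\to\U}A_n$. An element of the tracial ultraproduct $\prod_{n\to\U}A_n$ is a class of operator-norm bounded sequences $(a_n)$ with $a_n\in A_n$. Its image in $\M$ equals $b$ exactly when $\|b_n-a_n\|_2\to_\U0$, since two bounded representatives define the same element of $\M$ iff their $2$-norm difference tends to zero along $\U$. The inclusion of $\prod_{n\to\U}A_n$ into $\M$ is injective and isometric for $\|\cdot\|_2$, so no further subtlety arises, and we do not need any uniform operator-norm control on the $a_n$ beyond boundedness. Equivalently, by \eqref{eq:near-inclusion-expectations}, we could phrase the contradiction as $\|(1-E_{A_n})(b_n)\|_2\geq\delta$, while $E_{A_n}(b_n)$ is the $2$-norm closest point of $A_n$ to $b_n$.
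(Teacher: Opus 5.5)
Your proof is correct and is essentially the paper's argument. Both argue by contradiction: contractions $b_n\in B_n$ with $\dist_2(b_n,A_n)$ bounded below on a $\U$-large set would define an element of $\prod_{n\to\U}B_n$ lying outside $\prod_{n\to\U}A_n$. You additionally make explicit the choice $\varepsilon_n=\sup_{b\in B_n,\|b\|\leq1}\dist_2(b,A_n)$ and what membership in $\prod_{n\to\U}A_n$ means, both of which the paper leaves implicit.
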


\begin{proof}
Otherwise, on a set belonging to $\U$, one can choose contractions
$b_n\in B_n$ with $\dist_2(b_n,A_n)$ bounded below.  The element
$(b_n)_{n\to\U}$ belongs to $\prod_{\U}B_n$ but not to
$\prod_{\U}A_n$, a contradiction.
\end{proof}

Replacing $d_n$ by $m_n$ with $m_n/d_n\to_{\U}1$, adding or deleting a
corner of normalized trace tending to zero, and conjugating by partial
isometries on the remaining corner do not change the tracial
ultraproduct.  We refer to such changes as \emph{stable negligible
modifications}.  They will always be accompanied by estimates showing
that the modified and original unit balls have $2$-Hausdorff distance
tending to zero.

Once such a modification has been made, we identify the old and new
ultraproducts and continue to use the symbols $d_n$, $\tr_n$, and
$\|\cdot\|_2$.  Thus harmless changes of normalized trace caused by a
dimension ratio tending to one will not be recorded repeatedly.

\begin{lemma}
\label{lem:dimension-invariance}
Let $m_n/d_n\to_{\U}1$.  For every sequence of unital
$*$-subalgebras $A_n\subseteq M_{m_n}(\mathbb C)$ there are unital
$*$-subalgebras $B_n\subseteq M_{d_n}(\mathbb C)$ such that, under the
canonical identification of the ambient tracial ultraproducts,
$\prod_{n\to\U}A_n=\prod_{n\to\U}B_n$.
Consequently, internality of a subalgebra is invariant under stable
negligible changes of matrix dimension.
\end{lemma}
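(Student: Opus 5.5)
The plan is to build $B_n$ by hand from $A_n$, by cutting down or padding to a matching dimension. Write $k_n=\min(m_n,d_n)$ and let $P_n$ denote the projection onto the first $k_n$ coordinates, viewed in whichever ambient matrix algebra applies. The canonical identification of $\prod_{\U}M_{m_n}$ with $\prod_{\U}M_{d_n}$ sends $(x_n)$ to $(P_nx_nP_n\oplus 0)$. This is a well-defined trace-preserving $*$-isomorphism because $1-P_n$ has normalized trace $|m_n-d_n|/\max(m_n,d_n)\to_{\U}0$, and compressions by projections that are asymptotically $1$ in $2$-norm are asymptotically multiplicative on bounded sets. The obstruction is that $P_nA_nP_n$ is usually not an algebra. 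So the first step is to replace $A_n$ by a unitarily conjugate algebra that almost commutes with $P_n$ in a strong sense. Concretely, the goal is a unitary $w_n\in M_{m_n}$ with $\|w_n-1\|_2\to_{\U}0$ and a projection $Q_n\in A_n'$ such that $w_nQ_nw_n^*\leq P_n$ and $\tr(1-Q_n)\to_{\U}0$.

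To construct $Q_n$, decompose $A_n\cong\bigoplus_i M_{r_i}\otimes 1_{s_i}$, so that $A_n'\cong\bigoplus_i 1_{r_i}\otimes M_{s_i}$. We need a projection $Q_n$ in this commutant with trace at least $1-\delta_n$, where $\delta_n=\tr(1-P_n)$ (or about $2\delta_n$). On each block, choose $1_{r_i}\otimes q_i$ with $q_i$ of rank $\lfloor s_i(1-\delta_n)\rfloor$. The loss from rounding down is at most $\sum_i r_i/m_n$.

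Here there is a real obstacle: $\sum_i r_i$ need not be small relative to $m_n$. For example, if $A_n$ is a maximal abelian subalgebra, every $s_i=1$ and the commutant has no nontrivial small projections. I will handle this differently. Instead of taking $Q_n$ in $A_n'$, I will take $Q_n\in A_n$ central-ish, and more cleanly I will cut inside each block. For each $i$, remove from $A_n$ the blocks whose rounding is problematic, or, better, shrink the $M_{r_i}$ factor rather than the multiplicity.

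In the general block $M_{r_i}\otimes M_{s_i}$, choose $e_i\leq 1_{r_i}$ and $f_i\leq 1_{s_i}$ with $\operatorname{rank}(e_i)\operatorname{rank}(f_i)\geq r_is_i(1-\delta_n)-\min(r_i,s_i)$. Set $Q_n=\bigoplus_i e_i\otimes f_i$. Then $Q_nA_nQ_n\supseteq\bigoplus_i e_iM_{r_i}e_i\otimes f_i$, which is a unital $*$-algebra on $Q_n$ and is a compression by a projection of trace close to $1$. The rounding loss is $\sum_i\min(r_i,s_i)/m_n\leq\sum_i\sqrt{r_is_i}/m_n$. This is still not obviously small, so I will instead choose ranks greedily. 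Take $f_i=1$ and $e_i$ of rank $\lceil r_i(1-\delta_n)\rceil$ when $r_i\geq s_i$, and symmetrically when $s_i>r_i$. Rounding up is allowed because I only need $\tr Q_n\leq\tr P_n$ in total, with the blocks where rounding is impossible compensated by other blocks. I expect this bookkeeping to be the main obstacle. The fallback is to enlarge instead of shrink: embed into the bigger dimension and handle the mismatch there.

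The cleaner final route is symmetric. If $m_n\leq d_n$, set $B_n=A_n\oplus\mathbb C1_{d_n-m_n}$, which is exactly unital and has the same ultraproduct, since the added corner is negligible. If $m_n>d_n$, apply the same padding in reverse. Choose $k_n\geq m_n$ divisible by nothing special, namely $k_n=m_n+(\text{multiple of } d_n\text{ adjustments})$. Then view $M_{d_n}\hookrightarrow M_{m_n}$ as a corner, and note that $\prod A_n$ is contained in the corner algebra $\prod P_nM_{m_n}P_n$ up to a negligible compression. Finally, use Lemma~\ref{lem:uniform-near-inclusion} together with Christensen's perturbation theorem in the $2$-norm to conjugate $P_nA_nP_n$ exactly into a unital subalgebra of the corner. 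The perturbation step, producing an exact algebra from an approximate one with a $2$-norm-small unitary, is the one I would check most carefully. Internality invariance then follows immediately.
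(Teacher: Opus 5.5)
\textbf{The case $m_n>d_n$ is not proved.} For $m_n\leq d_n$ your padding is the paper's. For $m_n>d_n$ your plan also matches the paper: find a projection $Q_n$ such that $Q_nA_nQ_n$ is a unital algebra on $Q_n$, with $\operatorname{rank}Q_n\leq d_n$ and $\tr(1-Q_n)\to_{\U}0$; move $Q_n$ under the standard rank-$d_n$ corner by a unitary $2$-norm close to $1$; fill the rest with a scalar corner. You rightly allow cuts along either tensor leg. But you never construct $Q_n$, and that construction is the entire content of this case.

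Per-block proportional cutting cannot work. With ceilings the total rank is at least $(1-\delta_n)m_n=d_n$ and generally exceeds it. For a masa ($r_i=s_i=1$) nothing is removed at all, so $\operatorname{rank}Q_n=m_n>d_n$. With floors the masa is annihilated. Your remark that rounding up is allowed because only $\tr Q_n\leq\tr P_n$ is needed is backwards, since rounding up increases $\tr Q_n$. Also, the masa is no obstruction to $Q_n\in A_n'$, because there $A_n'=A_n$; the real obstruction to that is $A_n=M_{m_n}(\mathbb C)$.

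The missing idea is to cut non-proportionally:
\begin{itemize}
\item Put $k=m_n-d_n$. Delete whole summands in order as long as the deleted dimension $S=\sum_{i<j}r_is_i$ stays $\leq k$.
\item In the boundary summand $M_{r_j}(\mathbb C)\otimes1_{s_j}$, shrink the larger leg. This removes dimensions in steps of $h=\min\{r_j,s_j\}$. Delete $h\lceil(k-S)/h\rceil$ dimensions.
\item The overshoot is $e<h\leq\sqrt{r_js_j}\leq\sqrt{m_n}$. This is $o_{\U}(m_n)$ once you reduce to $m_n\to_{\U}\infty$; bounded dimensions force $m_n=d_n$ on a $\U$-large set.
\item The cut projection is a sum of central projections plus one projection of the form $f\otimes1_{s_j}$ or $1_{r_j}\otimes f$. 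So the compression is a unital algebra of rank $d_n-e$, and the rank-$e$ deficit is filled by a scalar corner.
\end{itemize}

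\textbf{The fallback route does not close the gap.} Padding only enlarges dimensions, so ``padding in reverse'' and the choice of $k_n$ have no content. And $P_nA_nP_n$ is not an algebra, so ``conjugating $P_nA_nP_n$ exactly into a unital subalgebra of the corner'' restates what has to be proved.

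Lemma~\ref{lem:uniform-near-inclusion} yields only a one-sided near inclusion of $A_n$ into $P_nM_{m_n}(\mathbb C)P_n\oplus\mathbb C(1-P_n)$. There is no off-the-shelf $2$-norm perturbation theorem that makes such an inclusion exact uniformly in the number and sizes of the blocks of $A_n$. As Section~\ref{sec:correction} recalls:
\begin{itemize}
\item the Popa--Sinclair--Smith results cited there assume symmetric closeness or an already exact inclusion;
\item their one-sided example shows that in general not even a nonzero spatially embedded corner need exist;
\item the correction of approximate matrix units has a tolerance that depends on the finite-dimensional source.
\end{itemize}
Once the combinatorial cut above is made, no perturbation theory is needed at all.
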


\begin{proof}
If the dimensions remain bounded along $\U$, the assumption implies
$m_n=d_n$ on a set belonging to $\U$, and there is nothing to prove.
We may therefore assume that the dimensions tend to infinity along $\U$.
At indices where $m_n\leq d_n$, take
$B_n=A_n\oplus\mathbb C1_{d_n-m_n}$.
The added scalar corner has normalized trace tending to zero.

It remains to treat indices where $m_n>d_n$.  Suppress the index $n$,
put $k=m-d$, and write, after unitary conjugacy,
\[
 A=\bigoplus_{i=1}^t
   \bigl(M_{r_i}(\mathbb C)\otimes1_{s_i}\bigr)
   \subseteq M_m(\mathbb C),
 \qquad w_i=r_is_i.
\]
Choose $j$ such that $S:=\sum_{i<j}w_i\leq k<S+w_j$, and put
$q=k-S$, $h=\min\{r_j,s_j\}$, and
$q'=h\lceil q/h\rceil$.  Then
$0\leq e:=q'-q<h\leq\sqrt{w_j}\leq\sqrt m$.
Delete the first $j-1$ summands.  In the $j$-th summand delete $q'$
dimensions by replacing $r_j$ with $r_j-q'/s_j$ if $s_j\leq r_j$,
and by replacing $s_j$ with $s_j-q'/r_j$ if $r_j<s_j$.
Let $p$ be the support projection of the retained algebra and put
$C=pAp$, regarded as an algebra on $p\mathbb C^m$.  Then
\[
 \operatorname{rank}(p)=m-S-q'=d-e,
 \qquad \operatorname{rank}(1-p)=k+e=o_{\U}(m).
\]

Write $D_1$ for the operator-norm unit ball of an algebra $D$ and
$d_{H,2}^{(m)}$ for Hausdorff distance in the normalized $2$-norm of
$M_m(\mathbb C)$.  Compression sends $A_1$ into $C_1$ up to an error
supported on $1-p$, while every element of $C_1$ has a norm-preserving
block extension to $A_1$.  Hence
\[
 d_{H,2}^{(m)}(A_1,C_1)
 \leq 2\sqrt{\frac{k+e}{m}}=o_{\U}(1).
\]
Let $Q\in M_m(\mathbb C)$ be the standard projection of rank $d$ and
choose $R\leq Q$ of rank $d-e$.  Since both $p$ and $R$ have codimension
$k+e=o_{\U}(m)$, there is a unitary $u\in U(m)$ such that
$upu^*=R$ and $\|u-1\|_{2,m}=o_{\U}(1)$.  Set
\[
 \widetilde B=uCu^*\oplus\mathbb C(Q-R)
 \subseteq QM_m(\mathbb C)Q
 \cong M_d(\mathbb C),
\]
and let $B\subseteq M_d(\mathbb C)$ be the corresponding algebra.
The extra scalar corner has rank $e=o_{\U}(m)$, so the unit balls of
$A$ and $B$ have normalized $2$-Hausdorff distance tending to zero
under the standard corner identification.  Equality of the two
ultraproduct algebras follows.  Interchanging $d_n$ and $m_n$ gives the
final assertion.
\end{proof}

\section{Correcting near inclusions}
\label{sec:correction}

There is an extensive perturbation theory for finite von Neumann
subalgebras.  Besides the foundational work of
Christensen \cite{Christensen79}, the closest general results to what follows
are due to Popa, Sinclair, and Smith \cite{PSS}.  Under the symmetric
hypothesis $\|E_A-E_B\|_{\infty,2}\leq\delta$, they obtain, for small
$\delta$, large cutdowns of $A$ and $B$ that are spatially isomorphic through
a partial isometry close to the identity \cite[Theorem~5.2]{PSS}.  They also
prove that if $A\subseteq B$ is already an exact inclusion and
$B\subset_{2,\delta}A$, with $\delta<23^{-1/2}$, then $A$ and $B$ agree on a
corner cut out by a projection $p\in Z(A'\cap B)$ satisfying
$\tr(p)\geq1-23\delta^2$ \cite[Theorem~3.5]{PSS}.  On the other hand, their
one-sided example \cite[Proposition~5.5 and the preceding discussion]{PSS}
shows that, for general finite von Neumann algebras,
$B\subset_{2,\delta}A$ need not yield even a nonzero spatially embedded
corner of $B$ in $A$.

For a fixed finite-dimensional algebra, a related correction of approximate
matrix units in uniform $2$-norm is given by Carri\'on, Castillejos,
Evington, Gabe, Schafhauser, Tikuisis, and White
\cite[Proposition~8.8]{CCEGSTW}.  Its tolerance depends on the chosen
finite-dimensional source.  It therefore does not give a uniform statement
for a sequence $B_n$ whose number and sizes of matrix blocks may grow.

What is needed here is consequently a specifically finite-dimensional and
stable statement: the near inclusion is only one-sided, the algebras may
vary without a complexity bound, a corner of asymptotically negligible
dimension may be changed, and a common algebra $D_n$ must be carried to the
same corrected copy.  These additional features are the content of
Proposition~\ref{prop:bratteli-correction} below.  We include the proof for completeness.

Stable correction of approximate representations in normalized $2$-norm
was developed by Gowers and Hatami for finite groups \cite{GH} and by
de Chiffre, Ozawa, and the author for amenable groups \cite{dCOT}.
Compact-group and unitary-group forms appear in \cite{DBT,ATU}.
The relative finite-dimensional statement needed here follows from
a Stinespring dilation and a spectral cutoff.

\begin{proposition}
\label{prop:bratteli-correction}
Let $A_n,B_n\subseteq M_{d_n}(\mathbb C)$ be unital
finite-dimensional algebras and suppose
$B_n\subset_{2,\varepsilon_n}A_n$, where
$\varepsilon_n\to_{\U}0$.  After stable negligible modifications there
are exact inclusions $\widehat B_n\subseteq\widehat A_n$
whose two ultraproducts agree with those of $B_n$ and $A_n$,
respectively.  If $D_n\subseteq A_n\cap B_n$, the correction can be
chosen so that the two identifications carry $D_n$ to the same subalgebra
on asymptotically full corners.
\end{proposition}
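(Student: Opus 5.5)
The plan is to treat the inclusion map $B_n\hookrightarrow M_{d_n}(\mathbb C)$ composed with $E_{A_n}$ as an approximate $*$-homomorphism $\varphi_n=E_{A_n}|_{B_n}\colon B_n\to A_n$, and to correct it into an honest unital $*$-homomorphism on a large corner. By \eqref{eq:near-inclusion-expectations}, $\|b-\varphi_n(b)\|_2\leq\varepsilon_n\|b\|$ for all $b\in B_n$, so $\varphi_n$ is a unital completely positive map that is multiplicative up to $O(\varepsilon_n)$ in $2$-norm on the unit ball. Moreover $\varphi_n$ is $D_n$-bimodular, since $D_n\subseteq A_n$ and $E_{A_n}$ is $A_n$-bimodular. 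Stinespring dilation then presents $\varphi_n(b)=V^*\rho(b)V$ with $\rho$ a $*$-representation of $B_n$ on $\mathbb C^{d_n}\otimes K$ commuting with $A_n'$ (we dilate the inclusion $B_n\subseteq M_{d_n}$ relative to the conditional expectation onto $A_n$, i.e.\ use the Jones basic construction $\langle M_{d_n},e_{A_n}\rangle$, so that the dilation takes values in $A_n\otimes M_k$-type algebras and $D_n$ acts diagonally).

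Next comes the spectral cutoff. Almost multiplicativity means $\|(1-VV^*)\rho(b)V\|_2$ is small for contractions $b$. Averaging over the unitary group of $B_n$ with Haar measure (which is legitimate because $B_n$ is finite dimensional and the estimate is uniform, independent of the block structure of $B_n$), the positive operator $T=\int_{U(B_n)}V^*\rho(u)^*VV^*\rho(u)V\,du$ lies in $\varphi_n(B_n)'$-type position, commutes with $D_n$, belongs to $A_n$ after applying $E_{A_n}$ (again by bimodularity), and satisfies $\tr(1-T)=O(\varepsilon_n^2)$. The spectral projection $p_n=\chi_{[1-\sqrt{\varepsilon_n},1]}(T)$ has $\tr(1-p_n)=O(\varepsilon_n^{3/2})\to_\U0$ by Markov's inequality. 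On $p_n$ the range of $V$ is almost $\rho$-invariant, and taking the $\rho(B_n)$-invariant subspace generated and then the polar part of the resulting intertwiner gives a partial isometry $w_n$ with $\|w_n-p_n\|_2\to_\U0$ such that $b\mapsto w_n^*\rho(b)w_n$ is a $*$-homomorphism into $p_nA_np_n$. Put $\widehat B_n$ as its image and $\widehat A_n=p_nA_np_n$, both filled up on $1-p_n$ by a common scalar corner, so $\widehat B_n\subseteq\widehat A_n$ exactly. The estimates $\|w_n-p_n\|_2\to0$ and $\tr(1-p_n)\to0$ give $2$-Hausdorff distance $o_\U(1)$ between unit balls, hence the same ultraproducts; this is a stable negligible modification.

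For the relative clause, since $T$ and $p_n$ commute with $D_n$ and $\varphi_n|_{D_n}=\id$, the corrected homomorphism restricted to $D_n$ is $d\mapsto dp_n$. Thus both identifications send $D_n$ to $p_nD_n$ on the full corner $p_n$. The main obstacle is the cutoff step: one must obtain the invariant subspace and the intertwiner with errors depending only on $\varepsilon_n$, not on the number or sizes of blocks of $B_n$, while keeping $p_n$ inside $A_n\cap D_n'$. I would first try to get this by working block by block in the central decomposition of $B_n$ and summing the errors weighted by the block traces, using Haar averaging to obtain uniformity.
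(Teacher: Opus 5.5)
\textbf{There is a genuine gap} in the step that produces $w_n$. You need a nonzero $*$-homomorphism $b\mapsto w_n^*\rho(b)w_n$ of $B_n$ into a corner $p_nA_np_n$ with $\tr(p_n)\to_{\U}1$. Such a homomorphism need not exist at all, regardless of any uniformity estimate.

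Here is an example. Let $d=n(n-1)$ and $A=M_{n-1}(\mathbb C)\otimes1_n$. Let $B\cong M_n(\mathbb C)\otimes1_{n-1}$ be transported by a unitary identification $\mathbb C^n\otimes\mathbb C^{n-1}\cong\mathbb C^{n-1}\otimes\mathbb C^n$ that is the identity on the common subspace $\mathbb C^{n-1}\otimes\mathbb C^{n-1}$, whose projection we call $e$. Take a contraction $x\in M_n(\mathbb C)$ with upper left $(n-1)\times(n-1)$ corner $x_{11}$. Then $b=x\otimes1_{n-1}\in B$ and $a=x_{11}\otimes1_n\in A$ satisfy $e(b-a)e=0$, hence $\|b-a\|_2^2\le2\|b-a\|^2\tr(1-e)\le 8/n$. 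So $B\subset_{2,\varepsilon}A$ with $\varepsilon\le\sqrt{8/n}\to0$.

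Yet every nonzero corner of $A$ is isomorphic to $M_r(\mathbb C)\otimes1_n$ with $r\le n-1$, and such a corner receives no nonzero $*$-homomorphism from $M_n(\mathbb C)$. An exact inclusion here forces the block size of $A$ to grow from $n-1$ to $n$. That is only possible after a stable change of ambient dimension, and the paper's correction indeed yields $\widehat A=\widehat B=M_n(\mathbb C)\otimes1_n$ on $\mathbb C^{n^2}$. So the obstacle is not the uniformity issue you flagged: a corrected target of the form $p_nA_np_n$ is simply the wrong ansatz.

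\textbf{The missing idea} is to perform the cutoff on the dilation space rather than on $\mathbb C^{d_n}$. The paper dilates $\Phi(b\otimes a)=E_A(b)a$ on $B\otimes A'$ as $V^*\rho(\cdot)V$, so that commuting representations $\rho_B$ and $\rho_{A'}$ are both available.
\begin{itemize}
\item The average $h=\int_{U(B)}\rho_B(u)VV^*\rho_B(u)^*\,du$ commutes with both representations and satisfies $\Tr(h-h^2)\le\varepsilon^2d$.
\item Its spectral projection $Q=1_{[1/2,1]}(h)$ cuts out an exactly invariant subspace with $|\operatorname{rank}Q-d|\le2\varepsilon^2d$.
\item On that subspace, $\widehat B=\pi_B(B)$ and $\widehat A=\pi_{A'}(A')'$ satisfy $\widehat B\subseteq\widehat A$ automatically.
\item $\widehat A$ keeps the multiplicities $s_j$ of $A$, but its block sizes become $r'_j=\sum_ik_{ji}p_i$, which may exceed $r_j$.
\item The comparison with the original algebras goes through the polar part of $QV$. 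It exactly intertwines $A'$ and $D$, which also gives the relative clause.
\end{itemize}

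With this dilation, your operator $T$ is $V^*hV$, the compression of $h$ to $\mathbb C^{d_n}$. Cutting off $T$ instead of $h$ discards exact invariance. That is why you had to search again for an invariant subspace and then tried to place it inside a corner of $A_n$. Cutting off $h$ also removes the uniformity problem you raised, because the Haar-averaging bound does not depend on the block structure of $B_n$.
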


\begin{proof}
It suffices to work on the $\U$-large set where $\varepsilon_n<1/2$.
We first suppress $n$ and use the unnormalized Hilbert--Schmidt norm.
The map
\[
 \Phi:B\otimes A'\longrightarrow M_d(\mathbb C),\qquad
 \Phi(b\otimes a)=E_A(b)a,
\]
is unital and completely positive: it is the composition of
$E_A|_B\otimes\id$ with the multiplication $*$-homomorphism
$A\otimes A'\to M_d(\mathbb C)$.  Take a finite-dimensional
Stinespring dilation \cite{Stinespring55}
$\Phi(x)=V^*\rho(x)V$, where $V:\mathbb C^d\to\mathcal K$ is an
isometry, and write $\rho_B,\rho_{A'}$ for the commuting factor
representations.  The restrictions of $\Phi$ to $D\otimes1$ and
$1\otimes A'$ are $*$-homomorphisms, for every $D\subseteq A\cap B$.
Consequently
\[
 \rho_B(d)V=Vd\quad(d\in D),\qquad
 \rho_{A'}(a)V=Va\quad(a\in A').
\]
For every $b\in B$, trace preservation and orthogonality of $E_A$ give
\[
 \|\rho_B(b)V-Vb\|_{\mathrm{HS}}^2
 =2\|b-E_A(b)\|_{\mathrm{HS}}^2.
\]

Put $P=VV^*$ and
$h=\int_{U(B)}\rho_B(b)P\rho_B(b)^*\,db$.
The operator $P$ commutes with $\rho_{A'}(A')$, so $h$ commutes with
both factor representations.  Averaging is an orthogonal projection
for the Hilbert--Schmidt inner product; hence
\[
 \Tr(h-h^2)=\|h-P\|_{\mathrm{HS}}^2
 =\int_{U(B)}\bigl(d-\|E_A(b)\|_{\mathrm{HS}}^2\bigr)\,db
 \leq\varepsilon^2d.
\]
Let $Q=1_{[1/2,1]}(h)$ and $m=\operatorname{rank}Q$.  Since
$\Tr h=d$ and $\Tr(QP)=\Tr(Qh)$, functional calculus gives
\[
 |m-d|\leq\|Q-P\|_{\mathrm{HS}}^2
 =\Tr\min\{h,1-h\}
 \leq2\varepsilon^2d.
\]
In particular $\|(1-Q)V\|_{\mathrm{HS}}^2\leq2\varepsilon^2d$.
Restrict the factor representations to $Q\mathcal K$, writing them as
$\pi_B,\pi_{A'}$, and put
\[
 \widehat B=\pi_B(B),\qquad \widehat A=\pi_{A'}(A')'.
\]
These algebras satisfy $\widehat B\subseteq\widehat A$.

Let $W$ be the polar part of $QV$.  Polar decomposition preserves
intertwining, so $W$ intertwines $A'$ and every common subalgebra $D$
exactly.  Singular-value comparison gives
\[
 \operatorname{rank}W\geq d-2\varepsilon^2d,
 \qquad \|W-V\|_{\mathrm{HS}}^2\leq4\varepsilon^2d.
\]
Thus the initial and final complements of $W$ have dimension
$O(\varepsilon^2d)$.  Restoring $n$, we have $m_n/d_n\to_{\U}1$;
with $\pi_{D,n}=\pi_{B,n}|_{D_n}$, the exact identities are
\begin{equation}
 \begin{aligned}
 W_n d&=\pi_{D,n}(d)W_n &&(d\in D_n),\\
 W_n a&=\pi_{A',n}(a)W_n &&(a\in A_n').
 \end{aligned}
 \label{eq:relative-intertwiners}
\end{equation}
The preceding estimates also give
\begin{equation}
 \sup_{b\in B_n,\,\|b\|\leq1}
 d_n^{-1/2}\|\pi_{B,n}(b)W_n-W_nb\|_{\mathrm{HS}}
 \leq(4+\sqrt2)\varepsilon_n\longrightarrow_{\U}0.
 \label{eq:uniform-B-intertwiner}
\end{equation}

We record the multiplicity estimates needed below.  Write
\[
 B_n=\bigoplus_i(M_{p_{n,i}}(\mathbb C)\otimes1_{q_{n,i}}),
 \qquad
 A_n=\bigoplus_j(M_{r_{n,j}}(\mathbb C)\otimes1_{s_{n,j}}).
\]
Since $\pi_{B,n}$ and $\pi_{A',n}$ are commuting representations of
these finite-dimensional algebras, there are integers $k_{n,ji}\geq0$
such that
\[
 Q_n\mathcal K_n=\bigoplus_{j,i}
 (\mathbb C^{p_{n,i}}\otimes\mathbb C^{s_{n,j}})^{\oplus k_{n,ji}}.
\]
Put $r'_{n,j}=\sum_i k_{n,ji}p_{n,i}$ and
$q'_{n,i}=\sum_j k_{n,ji}s_{n,j}$.  The exact $B_n$-intertwiner
\[
 T_n=\int_{U(B_n)}\pi_{B,n}(b)Q_nV_nb^*\,db
\]
satisfies $\|T_n-V_n\|_{\mathrm{HS}}\leq
2\sqrt2\varepsilon_n\sqrt{d_n}$, and therefore
$\operatorname{rank}T_n\geq d_n-8\varepsilon_n^2d_n$.
Comparing multiplicities with $T_n$ and $W_n$, respectively, gives
\[
 \sum_i p_{n,i}\min\{q_{n,i},q'_{n,i}\}
 \geq\operatorname{rank}T_n,\qquad
 \sum_j s_{n,j}\min\{r_{n,j},r'_{n,j}\}
 \geq\operatorname{rank}W_n.
\]
Both original total dimensions equal $d_n$, and both corrected totals
equal $m_n=d_n+O(\varepsilon_n^2d_n)$.  It follows that
\begin{equation}
 \frac1{d_n}\sum_j s_{n,j}|r'_{n,j}-r_{n,j}|
 +\frac1{d_n}\sum_i p_{n,i}|q'_{n,i}-q_{n,i}|
 \longrightarrow_{\U}0.
 \label{eq:marginal-errors}
\end{equation}

Finally, use $W_n$ to identify the two ambient spaces after adding
negligible complementary spaces and completing it to a unitary $U_n$.
Extend algebras on added corners by scalars.  Equation
\eqref{eq:uniform-B-intertwiner} and surjectivity of
$\pi_{B,n}:B_n\to\widehat B_n$, including on unit balls, give both
Hausdorff estimates for $B_n$ and $\widehat B_n$ under this
identification.  For $A_n$, let $p_n,q_n$ be the initial and final
supports of $W_n$.  Equation~\eqref{eq:relative-intertwiners} gives
$p_n\in A_n$, $q_n\in\widehat A_n$, and, by taking commutants on
the support corners,
\[
 W_n(p_nA_np_n)W_n^*=q_n\widehat A_nq_n.
\]
Compression to these supports changes contractions by $o_2(1)$
uniformly, and contractions in the corner algebras extend by zero.
Thus the same identification gives
\begin{equation}
 d_{H,2}\bigl((B_n)_1,(\widehat B_n)_1\bigr)+
 d_{H,2}\bigl((A_n)_1,(\widehat A_n)_1\bigr)
 \longrightarrow_{\U}0.
 \label{eq:algebra-hausdorff}
\end{equation}
Here $d_{H,2}$ is the Hausdorff distance of operator-norm unit balls
for the normalized $2$-norm.  This proves equality of the respective
ultraproducts.  The exact $D_n$-equivariance of $W_n$ supplies the
relative assertion on its asymptotically full supports; no equivariance
of the unitary completion is needed.
\end{proof}

\section{The no-drift argument}
\label{sec:no-drift}

The conditional median relative to $D_n$ turns the order between the
corrected scale operators into equality in the ultraproduct.  The
following finite-dimensional estimate then gives the reverse near
inclusion directly.  Remark~\ref{rem:drift-example} explains why the
common internal subalgebra is essential.

\begin{lemma}
\label{lem:reverse-scale-estimate}
Let $B\subseteq C\subseteq M_d(\mathbb C)$ be unital subalgebras, with
block decompositions
\[
 B=\bigoplus_i(M_{p_i}(\mathbb C)\otimes1_{q_i}),
 \qquad C=\bigoplus_j(M_{r_j}(\mathbb C)\otimes1_{s_j}).
\]
Let $w_i,z_j$ be their minimal central projections and put
\[
 \Delta_B=\sum_i\frac{p_i}{q_i}w_i,\qquad
 \Delta_C=\sum_j\frac{r_j}{s_j}z_j,\qquad
 R=\Delta_C\Delta_B^{-1}.
\]
Then $R\geq1$ and, for the trace-preserving conditional expectation
$E_B\colon C\to B$,
\begin{equation}
 \sup_{a\in C,\,\|a\|\leq1}\|a-E_B(a)\|_2
 \leq\sqrt{2}\,\tr(1-R^{-1})^{1/2}.
 \label{eq:reverse-scale-estimate}
\end{equation}
\end{lemma}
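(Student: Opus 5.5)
The plan is to reduce to the Bratteli data of the inclusion. I would first prove $R\geq1$, then bound the defect $\|a-E_B(a)\|_2^2=\|a\|_2^2-\|E_B(a)\|_2^2$ locally on the pieces $w_iz_j$ via a Pimsner--Popa-type inequality.

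\emph{Scales and $R\geq1$.} Since $B\subseteq C$ is unital, there are multiplicities $k_{ji}\geq0$ with
\[
 r_j=\sum_ik_{ji}p_i,\qquad q_i=\sum_jk_{ji}s_j .
\]
The projections $w_i$ and $z_j$ commute, because $z_j\in Z(C)\subseteq B'$. Moreover $w_iz_j\neq0$ exactly when $k_{ji}>0$. On such a piece, $R$ acts as the scalar
\[
 R_{ji}=\frac{r_jq_i}{s_jp_i}.
\]
Using $r_j\geq k_{ji}p_i$ and $q_i\geq k_{ji}s_j$ gives $R_{ji}\geq k_{ji}^2\geq1$. So $R$ is a positive invertible element of $Z(B)\vee Z(C)$ with $R\geq1$, and $1-R^{-1}$ is a positive contraction.

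\emph{Model case.} For the estimate I would first treat a single factor inclusion $M_p\otimes1_k\subseteq M_{pk}$, where $R=k^2$. Here $E_B=\id\otimes\tr_k$, which dominates $k^{-2}\,\id$ on positive elements; this is the Pimsner--Popa inequality with index $k^2$. Hence
\[
 \tr(E_B(a)^*E_B(a))\geq\ \text{(a quantity controlled by } k^{-2}),
\]
and the defect is at most $1-k^{-2}=1-R^{-1}$ for contractions.

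\emph{General case.} I would seek a pointwise Pimsner--Popa bound of the form
\[
 E_B(x)\geq E_B\bigl(R^{-1/2}xR^{-1/2}\bigr)\quad(x\in C_+),
\]
or a variant in which $R^{-1}$ is absorbed through its $Z(B)$- and $Z(C)$-parts. The natural tool is an orthonormal Pimsner--Popa basis $\{\lambda_\beta\}$ for $B\subseteq C$, built block by block from matrix units of $M_{r_j}$ relative to the embedded copies of $M_{p_i}$. Its central index operator $\sum_\beta\lambda_\beta\lambda_\beta^*$ should be computable in terms of the $R_{ji}$.

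Given this, I would split $a=\sum_{i,i'}w_iaw_{i'}$. For the diagonal blocks $w_iaw_i$, each $z_j$-component would be handled exactly as in the model case, with the local constant $R_{ji}^{-1}$. The off-diagonal blocks $w_iaw_{i'}$ with $i\neq i'$ are killed by $E_B$, since $w_i,w_{i'}\in Z(B)$. I would bound their $2$-norm using that $a$ is a contraction, by $\tr\bigl(w_i(1-R^{-1})\bigr)$-type quantities, also paid for through the $(1-R^{-1})$ weight. Combining the diagonal and off-diagonal contributions by the triangle inequality accounts for the factor $2$, giving $\|a-E_B(a)\|_2^2\leq2\tr(1-R^{-1})$.

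\emph{Main obstacle.} The delicate step is the general Pimsner--Popa inequality with the noncentral weight $R^{-1}$. The operator $R$ does not lie in $Z(B)$ or $Z(C)$ alone, and the minimal projections $w_iz_j$ are not in $B$. So one cannot simply compress by $w_iz_j$ and commute this with $E_B$.

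To handle this, I would first try to compute $E_B(z_j)$ exactly. I expect $E_B(z_j)=\sum_i\frac{k_{ji}s_j}{q_i}w_i$. I would then express $E_B(z_jxz_j)$ for $x\in C_+$ through the partial trace over the multiplicity space $\mathbb C^{k_{ji}}$. This should reduce each term to the model factor computation, with the constant $\frac{k_{ji}s_j}{q_i}\cdot\frac{p_i}{k_{ji}r_j}$-type, which equals at least $R_{ji}^{-1}$ up to the multiplicity factor. Failing a clean operator inequality, I would fall back on a direct Hilbert--Schmidt computation of $\|a\|_2^2-\|E_B(a)\|_2^2$ in the matrix units of each $M_{r_j}$, followed by Cauchy--Schwarz.
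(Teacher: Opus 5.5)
Your proof of $R\geq1$ via the Bratteli matrix is correct and is the same as the paper's. The $2$-norm estimate, however, is not proved, and the mechanism you propose already fails in your model case. For $B=M_p\otimes1_k\subseteq M_{pk}$ with $k\geq2$, take $a=1_p\otimes v$ with $v\in U(k)$ traceless, e.g.\ $v=\operatorname{diag}(1,-1)$ for $k=2$. Then $E_B(a)=(\id\otimes\tr_k)(a)=0$, so $\|a-E_B(a)\|_2^2=1>1-k^{-2}=\tr(1-R^{-1})$.

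The Pimsner--Popa inequality $E_B(x)\geq k^{-2}x$ is a statement about positive $x$. It gives no lower bound on $\|E_B(a)\|_2$ for a general contraction, because $E_B$ can annihilate unitaries. In particular, the factor $2$ cannot come from adding diagonal and off-diagonal contributions: it is already needed for a factor inclusion, where there are no off-diagonal pieces $w_iaw_{i'}$. Moreover, the weighted inequality $E_B(x)\geq E_B(R^{-1/2}xR^{-1/2})$ as you wrote it is not of Pimsner--Popa type. When $R$ is a scalar $\geq1$ it holds trivially and says nothing. Finally, you leave the general case explicitly as an obstacle, with only a fallback that you do not carry out.

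The missing idea is to separate an exact \emph{average} computation from a transfer to a \emph{uniform} bound.
\begin{itemize}
\item The average: the formula $\int(u_j)_{ab}\overline{(u_j)_{cd}}\,du_j=\delta_{ac}\delta_{bd}/r_j$ gives $\int_{U(C)}\|E_B(u)\|_2^2\,du=\tr(R^{-1})$. Hence $\delta:=\int_{U(C)}\|u-E_B(u)\|_2^2\,du=\tr(1-R^{-1})$ exactly.
\item The transfer: for fixed $v\in U(C)$, write the ambient $E_B$ as Haar averaging of conjugation over $U(B')$, so that $\|v-E_B(v)\|_2^2=\frac12\int_{U(B')}\|[v,w]\|_2^2\,dw$. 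Since $E_{C'}(w)$ commutes with $v$, you get $\|[v,w]\|_2\leq2\|w-E_{C'}(w)\|_2$. By Fubini, $\int_{U(B')}\|w-E_{C'}(w)\|_2^2\,dw=\delta$, since both sides equal half of $\iint\|[u,w]\|_2^2\,du\,dw$.
\end{itemize}
This gives $\|v-E_B(v)\|_2^2\leq2\delta$ for every unitary $v\in C$, and convexity extends it to contractions. The factor $2$ is the price of this $U(C)$--$U(B')$ symmetry argument, not of a block decomposition.
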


\begin{proof}
Let $(k_{ji})$ be the Bratteli matrix, so that
$r_j=\sum_i k_{ji}p_i$ and $q_i=\sum_jk_{ji}s_j$.
On the nonzero cell $z_jw_i$, the value of $R$ is
$r_jq_i/(s_jp_i)\geq k_{ji}^2\geq1$.

We first compute the mean squared distance to $B$ over $U(C)$.
Write the $j$-th block of $u\in U(C)$ using occurrences
$(i,\alpha)$, $1\leq\alpha\leq k_{ji}$.  Then
\[
 E_B(u)_i=\frac1{q_i}\sum_{j,\alpha}s_j
                   u_{j,(i,\alpha),(i,\alpha)}.
\]
The blocks $u_j$ are independent Haar unitaries in $U(r_j)$, and
their entries satisfy
$\int (u_j)_{ab}\overline{(u_j)_{cd}}\,du_j
=\delta_{ac}\delta_{bd}/r_j$.  Consequently,
\[
 \int_{U(C)}\|E_B(u)\|_2^2\,du
 =\frac1d\sum_{j,i}\frac{k_{ji}p_i^2s_j^2}{q_ir_j}
 =\tr(R^{-1}).
\]
Since $E_B$ is an orthogonal projection in $L^2$, this gives
\[
 \delta:=\int_{U(C)}\|u-E_B(u)\|_2^2\,du
 =\tr(1-R^{-1}).
\]

For the uniform bound, take commutants in $M_d(\mathbb C)$ and also
write $E_B$ for the ambient conditional expectation.  Conditional
expectations are Haar averages over the appropriate commutant unitary
groups.  Thus, for every $v\in U(C)$,
\begin{align*}
 \|v-E_B(v)\|_2^2
 &=\frac12\int_{U(B')}\|[v,w]\|_2^2\,dw\\
 &\leq2\int_{U(B')}\|w-E_{C'}(w)\|_2^2\,dw
 =2\delta.
\end{align*}
Here $E_{C'}(w)$ commutes with $v$, and the last equality follows
from Fubini: both integrals defining that equality are one half of
$\int_{U(B')}\int_{U(C)}\|[u,w]\|_2^2\,du\,dw$.
Finally, every contraction in $C$ is a convex combination of unitaries,
so convexity proves \eqref{eq:reverse-scale-estimate}.
\end{proof}

\begin{theorem}
\label{thm:no-drift}
Let $A=\prod_{n\to\U}A_n$ and $D=\prod_{n\to\U}D_n$ be internal
finite-dimensional subalgebras of a tracial matrix
ultraproduct, with $D_n\subseteq A_n$.  Let
$u_1,\ldots,u_m\in U(\M)$ have representatives
$u_{\ell,n}\in D_n'$.  Assume $u_\ell^*Au_\ell\subseteq A$ for
$1\leq\ell\leq m$ and
$D=A\cap\bigcap_{\ell=1}^m\{u_\ell\}'$.  Then
$u_\ell^*Au_\ell=A$ for $1\leq\ell\leq m$.
\end{theorem}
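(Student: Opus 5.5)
Fix $\ell$ and write $u=u_\ell$, $u_n=u_{\ell,n}$. The plan follows the outline of the introduction.

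\textbf{Step 1: reduce to finite level.} Put $A_n^-=u_n^*A_nu_n$ and $A_n^+=A_n$. Both contain $D_n$, because $u_n\in D_n'$ and $D_n\subseteq A_n$. From $u^*Au\subseteq A$ and Lemma~\ref{lem:uniform-near-inclusion} we get $A_n^-\subset_{2,\varepsilon_n}A_n^+$ with $\varepsilon_n\to_\U0$.

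\textbf{Step 2: correct the near inclusion.} Apply Proposition~\ref{prop:bratteli-correction} with the common subalgebra $D_n$. This gives exact inclusions $\widehat A_n^-\subseteq\widehat A_n^+$ whose ultraproducts are $u^*Au$ and $A$, and both identifications carry $D_n$ to one common copy $\widehat D_n$ on asymptotically full corners. Lemma~\ref{lem:reverse-scale-estimate} then gives $R_n=\Delta_{\widehat A_n^+}\Delta_{\widehat A_n^-}^{-1}\ge1$. It also shows that $u^*Au=A$ follows once $\tr(1-R_n^{-1})\to_\U0$, because the estimate \eqref{eq:reverse-scale-estimate} then yields $A\subseteq u^*Au$. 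So the goal is to show that $R_n$ concentrates at $1$.

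\textbf{Step 3: compare the scale operators by a conditional median.} The scale operators lie in the centres, so both lie in $\widehat D_n'$. Transport $\Delta_n^\pm$ back to the original picture:
\begin{itemize}
\item $\Delta_n^+$ is the scale operator of $A_n$;
\item $\Delta_n^-$ is that of $u_n^*A_nu_n$, namely $u_n^*\Delta_n^+u_n$ up to $o_2(1)$ errors on negligible corners.
\end{itemize}
Since $\Delta_n^+$ is central in $A_n$, it commutes with $D_n\subseteq A_n$.

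Using $\widehat D_n$-valued spectral data, define a $D_n$-conditional median transform $f_n(\Delta_n^+)$. This is a spectral projection (or a bounded monotone function, smoothed to handle atoms) $m_n\in A_n\cap D_n'$ with $E_{D_n}(m_n)=1/2$, obtained as a $D_n$-relative quantile of the central element $\log\Delta_n^+$. Because $u_n\in D_n'$, the same recipe applied to $\Delta_n^-$ gives exactly $u_n^*m_nu_n$. The common copy of $D_n$ is what makes the source-side and target-side medians comparable.

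The order $\Delta_n^-\le\Delta_n^+$ (from $R_n\ge1$), together with monotonicity of the quantile construction, should give $u_n^*m_nu_n\le m_n$ approximately in the $L^1$ sense. In the ultraproduct this reads $u^*mu\le m$, while $\tr(u^*mu)=\tr(m)$. Hence $u^*mu=m$.

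Let $m_\infty$ be the limit, an element of $A$. Running the argument for all $\ell$ simultaneously puts $m_\infty$ in $A\cap\bigcap_\ell\{u_\ell\}'=D$. Since $E_D(m_\infty)=1/2$, we get $m_\infty=1/2$.

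\textbf{Step 4: conclude concentration.} Doing this for a family of quantile levels $t\in(0,1)$, or for $f$ ranging over monotone functions, forces $\log\Delta^-$ and $\log\Delta^+$ to have asymptotically equal $D$-conditional distributions. Combined with the pointwise order $\Delta^-\le\Delta^+$, this forces $\log R_n\to0$ in measure. Then $\tr(1-R_n^{-1})\to0$ by boundedness of $1-R_n^{-1}$.

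\textbf{Main obstacle.} I expect Step 3 to be hardest. One must define the conditional median so that it is exactly $u_n$-equivariant, bounded, and monotone in the scale operator when that operator has unbounded range and atoms. One must also make sure the approximate order survives the correction's negligible corners in $L^1$. My first attempt would use the distribution functions $t\mapsto E_{D_n}(1_{[t,\infty)}(\log\Delta_n^\pm))$ and interpolate on atoms by a $D_n$-central weight, so that the transform has conditional expectation exactly $1/2$.
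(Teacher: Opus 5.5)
Your Steps 1--3 follow the paper's proof closely: correction with a common copy of $D_n$, the exact order $\Delta_n^-\le\Delta_n^+$, a bounded monotone transform with a common $D_n$-central threshold, equality of traces, membership in $D$, and the value $\frac12$.

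\textbf{The gap is in Step 4.} You claim that asymptotically equal ($D$-conditional) laws of $\log\Delta_n^\pm$, together with $\Delta_n^-\le\Delta_n^+$, force $\log R_n\to0$ in measure. This implication is false without tightness, and tightness is exactly what is at stake. In the drift example of Remark~\ref{rem:drift-example}:
\begin{itemize}
\item one has $R_n=4$ on almost all of the mass;
\item $\log\Delta_n^\pm$ are spread almost uniformly over an interval of length of order $n$;
\item their laws differ by only $O(1/n)$ in total variation.
\end{itemize}
Running more quantile levels does not help as long as you only keep equality of laws. That discards what Step 3 actually gives, which is much stronger: the transformed scale converges in $2$-norm to the \emph{scalar} $\frac12$. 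For a strictly increasing transform of the ratio of $\Delta$ to the threshold, this means both $\Delta_n^\pm$ concentrate around the common threshold.

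With the paper's choice $x_n^\pm=\Delta_n^\pm(\Delta_n^\pm+\widehat m_n)^{-1}$, where $\widehat m_n$ is the transported $D_n$-central threshold, one has $x_n^\pm/(1-x_n^\pm)=\Delta_n^\pm\widehat m_n^{-1}$. Hence $R_n^{-1}=x_n^-(1-x_n^+)/(x_n^+(1-x_n^-))$. Concentration of the joint spectral measure at $(\frac12,\frac12)$ then gives $R_n^{-1}\to1$ in measure, and $\tr(1-R_n^{-1})\to0$ by bounded convergence. A single median level suffices.

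\textbf{The obstacle you flag in Step 3 also needs a specific resolution, and your stated first attempt would fail.}

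First, the threshold must be computed once, in the original picture. There $u_{\ell,n}\in D_n'$ makes it the same for $A_n$ and for $u_{\ell,n}^*A_nu_{\ell,n}$. It must then be carried to the corrected space by the common $\pi_{D,n}$. If you recompute a quantile from the corrected $\Delta_n^-$, the two transforms are no longer comparable by monotonicity.

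Second, $\Delta_n^-$ is not $u_{\ell,n}^*\Delta_nu_{\ell,n}$ up to $o_2(1)$ on negligible corners. Estimate~\eqref{eq:special-marginal-errors} only controls $\sum_i r_i|s_i'-s_i|$ and $\sum_j s_j|r_j'-r_j|$. It does not exclude $s_i'\neq s_i$, or $r_j'\neq r_j$, with ratio close to $1$, on almost all of the mass. Such small shifts move every atom off the threshold. A spectral projection, or a quantile interpolated on atoms, then flips from $\frac12$ to $0$ or $1$ on a large part of the space, so the corrected transform no longer represents the original one.

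What is needed is a continuous transform whose modulus of continuity in the ratio of $\Delta$ to the threshold is uniform in the threshold, which may be unbounded. In the paper this is the role of $\beta_S$ and of the bound~\eqref{eq:uniform-scale-bound}.
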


\begin{proof}
Write
$A_n=\bigoplus_{i\in I_n}(M_{r_{n,i}}(\mathbb C)\otimes1_{s_{n,i}})$,
with minimal central projections $z_{n,i}$, and put
\[
 \Delta_n=\sum_i\frac{r_{n,i}}{s_{n,i}}z_{n,i}\in Z(A_n)_+.
\]
For commuting positive invertible operators, write
$\beta_S(T)=T(T+S)^{-1}$.  If $e$ is a minimal central projection of
$D_n$, then
\[
 E_{D_n}(e\beta_t(\Delta_n))=c_{n,e}(t)e.
\]
Indeed, this expectation belongs to $eD_n$ and commutes with $D_n$.
The scalar $c_{n,e}(t)$ is continuous and strictly decreasing from $1$
to $0$.  Choose $m_{n,e}>0$ with $c_{n,e}(m_{n,e})=1/2$, and set
\[
 m_n=\sum_e m_{n,e}e\in Z(D_n)_+,
 \qquad x_n=\beta_{m_n}(\Delta_n).
\]
Thus $E_{D_n}(x_n)=\frac12 1$.  We call $m_n$ the
$D_n$-conditional median of $\Delta_n$.

Fix $\ell$, and put $A_n^+=A_n$,
$A_n^-=u_{\ell,n}^*A_nu_{\ell,n}$, and
$y_n=u_{\ell,n}^*x_nu_{\ell,n}$.
Lemma~\ref{lem:uniform-near-inclusion} gives
$A_n^-\subset_{2,\varepsilon_n}A_n^+$ with
$\varepsilon_n\to_{\U}0$.  Since $u_{\ell,n}\in D_n'$, both
algebras contain $D_n$.  Apply the relative form of
Proposition~\ref{prop:bratteli-correction}, obtaining
$\widehat A_n^-\subseteq\widehat A_n^+$ on a space of dimension
$\widehat d_n$.

Suppress $n$ from the block indices.  Label the source blocks using
conjugation by $u_{\ell,n}$, so that their original sizes and
multiplicities are again $r_i,s_i$.  The Bratteli matrix $(k_{ji})$ of
the corrected inclusion gives
\[
 \widehat A_n^-=
 \bigoplus_{i:s'_i>0}(M_{r_i}(\mathbb C)\otimes1_{s'_i}),\qquad
 \widehat A_n^+=
 \bigoplus_{j:r'_j>0}(M_{r'_j}(\mathbb C)\otimes1_{s_j}),
\]
where $r'_j=\sum_i k_{ji}r_i$ and $s'_i=\sum_jk_{ji}s_j$.
Equation~\eqref{eq:marginal-errors} gives
\begin{equation}
 \frac1{d_n}\sum_j s_j|r'_j-r_j|
 +\frac1{d_n}\sum_i r_i|s'_i-s_i|\longrightarrow_{\U}0.
 \label{eq:special-marginal-errors}
\end{equation}
In particular, $\widehat d_n/d_n\to_{\U}1$.
Write $\widehat w_i,\widehat z_j$ for the corrected central
projections and set
\[
 \Delta_n^-=\sum_{i:s'_i>0}\frac{r_i}{s'_i}\widehat w_i,
 \qquad
 \Delta_n^+=\sum_{j:r'_j>0}\frac{r'_j}{s_j}\widehat z_j.
\]
On every nonzero cell $\widehat z_j\widehat w_i$,
\begin{equation}
 R_n:=\Delta_n^+(\Delta_n^-)^{-1}
 =\frac{r'_js'_i}{r_is_j}\geq k_{ji}^2\geq1.
 \label{eq:scale-order-cell}
\end{equation}
Here the scalar equality describes the restriction of $R_n$ to the
cell.  Thus $\Delta_n^-\leq\Delta_n^+$.

Let $\pi_{D,n}$ be the common representation of $D_n$ furnished by
the correction, and put $\widehat m_n=\pi_{D,n}(m_n)$ and
$x_n^\pm=\beta_{\widehat m_n}(\Delta_n^\pm)$.
The three operators $\Delta_n^-,\Delta_n^+,\widehat m_n$ commute,
since the first is central in $\widehat A_n^-$, the second is central
in $\widehat A_n^+$, and the third belongs to $\widehat A_n^-$.
Consequently,
\begin{equation}
 0<x_n^-\leq x_n^+<1.
 \label{eq:bounded-scale-order}
\end{equation}

We verify that these corrected bounded scales represent the original
ones.  Let $W_n$ be the common partial isometry furnished by
Proposition~\ref{prop:bratteli-correction}.  Define the raw transported
scales on the corrected space by
\[
 \begin{aligned}
 \widetilde x_n^-=\pi_{B,n}(y_n)
 =\beta_{\widehat m_n}
       \left(\sum_{i:s'_i>0}\frac{r_i}{s_i}\widehat w_i\right), \quad
 \widetilde x_n^+=\beta_{\widehat m_n}
       \left(\sum_{j:r'_j>0}\frac{r_j}{s_j}\widehat z_j\right).
 \end{aligned}
\]
Here $B_n=A_n^-$, and the source formula uses
$u_{\ell,n}m_n=m_nu_{\ell,n}$.  Since $y_n$ is a contraction of
$A_n^-$, the uniform estimate \eqref{eq:uniform-B-intertwiner} gives
\[
 \|\widetilde x_n^-W_n-W_ny_n\|_2\longrightarrow_{\U}0.
\]
On the target side, $W_n$ exactly intertwines $D_n$ and $(A_n^+)'$.
In particular it intertwines $m_n$ and $\Delta_n$, whose central
projections have the coordinate standard labels $j$.  Functional
calculus on its retained supports therefore gives
\[
 \widetilde x_n^+W_n=W_nx_n.
\]

The elementary scalar bound
\begin{equation}
 \left|\frac{a}{a+s}-\frac{b}{b+s}\right|^2
 \leq\left|\frac ab-1\right|\qquad(a,b,s>0)
 \label{eq:uniform-scale-bound}
\end{equation}
is uniform in $s$: the difference is at most both $1$ and
$|a/b-1|$, since it equals
$|a/b-1|bs/((a+s)(b+s))$.
Apply this bound on the full corrected blocks, with
$a/b=s_i/s'_i$ on source block $i$ and $a/b=r_j/r'_j$ on target
block $j$.  Their dimensions are $r_is'_i$ and $r'_js_j$,
respectively, so
\[
 \begin{aligned}
 \|x_n^--\widetilde x_n^-\|_{2,\widehat d_n}^2
 &\leq\frac1{\widehat d_n}\sum_i r_i|s'_i-s_i|,\\
 \|x_n^+-\widetilde x_n^+\|_{2,\widehat d_n}^2
 &\leq\frac1{\widehat d_n}\sum_j s_j|r'_j-r_j|.
 \end{aligned}
\]
Both bounds tend to zero by \eqref{eq:special-marginal-errors}.
No uniform bound on $m_n$ or $m_n^{-1}$ is needed.

Use the common unitary completion $U_n$ of $W_n$ from the correction,
after stabilization to dimension
$d_n^\circ=\max\{d_n,\widehat d_n\}$, and extend the bounded scales
by zero on the added summands.  Denote the resulting norm by
$\|\cdot\|_{2,\circ}$.  The complementary corners have dimension
$o_{\U}(d_n)$, so the preceding estimates imply
\begin{equation}
 \|x_n^-U_n-U_ny_n\|_{2,\circ}
 +\|x_n^+U_n-U_nx_n\|_{2,\circ}
 \longrightarrow_{\U}0.
 \label{eq:scale-transport}
\end{equation}

Put $x=(x_n)_{n\to\U}$.  Under this common stable identification,
\eqref{eq:scale-transport} identifies $(x_n^-)_{n\to\U}$ with
$u_\ell^*xu_\ell$ and $(x_n^+)_{n\to\U}$ with $x$.
Thus \eqref{eq:bounded-scale-order} passes to
$u_\ell^*xu_\ell\leq x$ in $\M$.  The two elements have equal trace,
so faithfulness of the trace gives $u_\ell^*xu_\ell=x$.
This holds for every $\ell$, whence
$x\in A\cap\bigcap_\ell\{u_\ell\}'=D$.
Coordinatewise conditional expectation now yields
\[
 x=E_D(x)=(E_{D_n}(x_n))_{n\to\U}=\tfrac12 1.
\]
Using \eqref{eq:scale-transport} once more, we obtain
$\|x_n^\pm-\frac12 1\|_{2,\widehat d_n}\to_{\U}0$ on the corrected
space, since the stabilizing corners have vanishing relative dimension.

It remains to apply Lemma~\ref{lem:reverse-scale-estimate}.
Functional calculus for the three commuting operators gives
\[
 R_n^{-1}
 =\frac{x_n^-(1-x_n^+)}{x_n^+(1-x_n^-)}.
\]
The joint spectral measures of $(x_n^-,x_n^+)$ concentrate at
$(1/2,1/2)$, where the scalar function on the right is continuous
and equals $1$.  Thus $R_n^{-1}\to1$ in measure.  Since
$0\leq R_n^{-1}\leq1$, bounded convergence gives
\[
 \widehat{\tr}_n(1-R_n^{-1})\longrightarrow_{\U}0.
\]
Lemma~\ref{lem:reverse-scale-estimate} therefore places every
contraction of $\widehat A_n^+$ within $o_{\U}(1)$ in normalized
$2$-norm of the contraction
$E_{\widehat A_n^-}(a)\in\widehat A_n^-$, uniformly in $a$.
The common identification \eqref{eq:algebra-hausdorff} gives the
reverse near inclusion $A_n^+\subset_{2,o_{\U}(1)}A_n^-$.
Passing to the ultraproduct gives $A\subseteq u_\ell^*Au_\ell$.
Together with the assumed inclusion, this proves the theorem.
\end{proof}

\begin{remark}
\label{rem:drift-example}
The following standard example shows why a finite-level anchor is
needed.  Let
\[
 A_n=\bigoplus_{i=1}^n M_{2^i}(\mathbb C),
 \qquad
 \tau_n(x_1,\ldots,x_n)
   =\frac1n\sum_{i=1}^n\tr_{2^i}(x_i),
\]
so that the trace is distributed evenly among the $n$ simple summands.
This trace is the restriction of the normalized matrix trace under the
realization
\[
 A_n=\bigoplus_{i=1}^n
       \bigl(M_{2^i}(\mathbb C)\otimes1_{2^{n-i}}\bigr)
       \subseteq M_{n2^n}(\mathbb C).
\]
Define
\[
 \theta_n(x_1,\ldots,x_n)
   =(0,x_1\otimes 1_2,x_2\otimes 1_2,\ldots,
      x_{n-1}\otimes 1_2).
\]
Thus $\theta_n$ shifts every summand one place to the right, doubles
its defining representation, and annihilates the final summand
$M_{2^n}(\mathbb C)$.  Each $\theta_n$ is an exact nonunital,
noninjective $*$-homomorphism.  For every contraction
$x=(x_1,\ldots,x_n)$,
\[
 \|1-\theta_n(1)\|_{2,\tau_n}^2=\frac1n,
 \qquad
 |\tau_n(\theta_n(x))-\tau_n(x)|\leq\frac1n,
\]
and
\[
 0\leq \|x\|_{2,\tau_n}^2-\|\theta_n(x)\|_{2,\tau_n}^2
   =\frac1n\|x_n\|_{2,\tr_{2^n}}^2\leq\frac1n.
\]
Consequently $(\theta_n)_n$ induces a unital trace-preserving injective
$*$-endomorphism $\theta$ of $\prod_{n\to\U}(A_n,\tau_n)$.
This endomorphism is implemented by a unitary in the ambient tracial
ultraproduct.  Indeed, let $w_n\in U(n2^n)$ cyclically permute the $n$
summand spaces of dimension $2^n$, sending the $i$-th space to the
$(i+1)$-st and the last space to the first.  Choose the identifications so
that conjugation by $w_n$ agrees with $\theta_n$ on the first $n-1$ input
summands.  For every contraction $x=(x_1,\ldots,x_n)\in A_n$,
\[
 \|w_nxw_n^*-\theta_n(x)\|_2^2
 =\frac1n\|x_n\|_{2,\tr_{2^n}}^2\leq\frac1n.
\]
Hence $w=(w_n)_{n\to\U}$ implements $\theta$ in the ambient ultraproduct:
$\theta(a)=waw^*$ for every $a\in\prod_{n\to\U}(A_n,\tau_n)$.
It is not surjective.  Indeed, set
$v=\operatorname{diag}(1,-1)\in M_2(\mathbb C)$ and
\[
 y_n=(v,1_2\otimes v,1_4\otimes v,\ldots,
          1_{2^{n-1}}\otimes v)\in U(A_n).
\]
In the first summand the range of $\theta_n$ is zero, while in the
$j$-th summand, for $j\geq2$, it is
$M_{2^{j-1}}(\mathbb C)\otimes1_2$.  The trace-preserving conditional
expectation of $1_{2^{j-1}}\otimes v$ onto this subalgebra is zero.
Therefore $\dist_{2,\tau_n}(y_n,\theta_n(A_n))=1$ for every $n$, and
$(y_n)_{n\to\U}$ does not lie in the range of $\theta$.

The lost input and output boundary summands each have trace $1/n$, so
their defects disappear in the ultraproduct, while the proper matrix
amplification persists in every summand.  Thus asymptotic unitality and
trace preservation alone do not prevent drift.  It is likewise not
enough merely to observe that each finite center has finitely many
atoms.  The internality of $D$, together with the identity
$E_{D_n}(x_n)=\frac12 1$, provides the finite-level anchor that excludes
this phenomenon and is essential to the argument.  In the permutation
proof of \cite{KT}, the corresponding role is played by the ambient
Kazhdan component decomposition and its weighted median.
\end{remark}

\begin{remark}
\label{rem:baumslag-solitar-drift}
The distinction between genuine finite quotients and general sofic
approximations is already visible in
$BS(1,2)=\langle a,t\mid tat^{-1}=a^2\rangle$.  In a height
approximation, level $i$ may be taken to consist of $2^{n-i}$
$a$-cycles of length $2^i$, so every level has the same size.  Away from
the two end levels, $t$ sends each cycle onto one of the two
$a^2$-orbits in a cycle at the next level.  This is the permutation
version of the shift in Remark~\ref{rem:drift-example}.  In contrast,
in the finite quotients
$C_{2^n-1}\rtimes C_n$, the generator of $C_n$ acts by multiplication
by $2$, so $\langle a^2\rangle=\langle a\rangle$ and there is no drift.

This behavior of finite quotients is not a consequence of the
abelianity of $\langle a\rangle$.  If $q\colon G\to Q$ is a homomorphism
to a finite group and $tHt^{-1}\leq H$, then
$q(t)q(H)q(t)^{-1}\leq q(H)$.  The two groups have the same order, so
they are equal.  Thus a compressor always normalizes the finite image
of $H$.
\end{remark}

\section{Proof of the main results}
\label{sec:main-results}

\subsection{From the centralizer problem to normalization}

We now assume a positive answer to the centralizer problem and prove
Theorem~\ref{thm:normalization-intro}.

\begin{proposition}
\label{prop:coherent-models}
Let $H<G$ be Kazhdan groups and let
$\pi\colon G\to U(\M)$ be a homomorphism.  Put
$A=\pi(H)'\cap\M$ and $D=\pi(G)'\cap\M$.
After stable negligible modifications, there are finite-dimensional
algebras $D_n\subseteq A_n$ such that
$A=\prod_{n\to\U}A_n$ and $D=\prod_{n\to\U}D_n$.
If $t_1,\ldots,t_m\in G$ and $u_\ell=\pi(t_\ell)$, representatives may
be chosen so that $u_{\ell,n}\in D_n'$ for all $n$ and $\ell$.
\end{proposition}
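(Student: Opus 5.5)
Apply the centralizer problem twice: once to $\pi|_H$, which is a homomorphism of the Kazhdan group $H$, and once to $\pi$ itself. This gives finite-dimensional $*$-subalgebras $A_n^0,D_n^0\subseteq M_{d_n}(\mathbb C)$ with
\[
 A=\prod_{n\to\U}A_n^0,\qquad D=\prod_{n\to\U}D_n^0.
\]
These models need not be aligned, so the plan is to force the two remaining conditions in order. First we arrange $D_n\subseteq A_n$. Then we arrange that representatives of the $u_\ell$ commute with $D_n$.

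\textbf{Step 1: aligning $D_n$ inside $A_n$.} Since $H\leq G$ we have $D\subseteq A$. Lemma~\ref{lem:uniform-near-inclusion} gives $D_n^0\subset_{2,\varepsilon_n}A_n^0$ with $\varepsilon_n\to_\U0$. We apply Proposition~\ref{prop:bratteli-correction} with $B_n=D_n^0$. After stable negligible modifications this yields exact inclusions $D_n:=\widehat D_n\subseteq\widehat A_n=:A_n$ whose ultraproducts are still $D$ and $A$. By Lemma~\ref{lem:dimension-invariance} and the convention on stable negligible modifications, we keep the ambient $\M$ and the dimensions $d_n$.

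\textbf{Step 2: representatives in $D_n'$.} Pick arbitrary representatives $v_{\ell,n}$ of $u_\ell$. Since $u_\ell$ commutes with $D$, the uniform version of Lemma~\ref{lem:uniform-near-inclusion} applies: if $\sup_{d\in(D_n)_1}\|[v_{\ell,n},d]\|_2$ stayed bounded below along $\U$, choosing contractions $d_n$ that realize this would give an element of $D$ not commuting with $u_\ell$. So the sup tends to zero. Next set
\[
 w_{\ell,n}=E_{D_n'}(v_{\ell,n})=\int_{U(D_n)}d\,v_{\ell,n}\,d^*\,dd .
\]
Then $w_{\ell,n}\in D_n'$ and $\|w_{\ell,n}-v_{\ell,n}\|_2\to_\U0$. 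Finally, let $u_{\ell,n}$ be the unitary part of the polar decomposition of $w_{\ell,n}$, made unitary by a choice on the kernel inside the finite-dimensional algebra $D_n'$. Since $D_n'$ is a von Neumann algebra, it contains the polar part, and $\|u_{\ell,n}-w_{\ell,n}\|_2\to0$ because $w_{\ell,n}$ is close in $2$-norm to a unitary. So $u_{\ell,n}\in U(D_n')$ represents $u_\ell$, and finitely many $\ell$ are handled at once.

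\textbf{Main obstacle.} The delicate point is Step 1. Proposition~\ref{prop:bratteli-correction} changes the ambient identification by a unitary $U_n$ completing $W_n$, and we must check that the statement's equalities $A=\prod A_n$ and $D=\prod D_n$ hold under one common identification, with representatives of $u_\ell$ transported consistently. The conjugation only needs to be asymptotically trivial on the relevant unit balls, which is exactly the Hausdorff estimate \eqref{eq:algebra-hausdorff}. Since Step 2 is performed after Step 1, with representatives of $u_\ell$ re-chosen in the corrected coordinates, only these ultraproduct identities must survive, and they do.
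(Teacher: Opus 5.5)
Your proposal is correct and follows the paper's proof essentially step for step: the centralizer problem is applied to $\pi|_H$ and to $\pi$, then $D_n$ is aligned inside $A_n$ via Lemma~\ref{lem:uniform-near-inclusion} and Proposition~\ref{prop:bratteli-correction}, and finally $v_{\ell,n}$ is Haar-averaged over $U(D_n)$ and the polar part is completed to a unitary inside $D_n'$. The only cosmetic difference is that the paper simply keeps working in the corrected ambient dimensions, as the phrase ``after stable negligible modifications'' permits, whereas you invoke Lemma~\ref{lem:dimension-invariance} to return to $d_n$; that appeal is unnecessary, and applying the lemma to the two algebras separately would not obviously preserve the exact inclusion $D_n\subseteq A_n$.
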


\begin{proof}
Apply the centralizer problem to $\pi|_H$ and $\pi|_G$.  Since both
groups are Kazhdan, there are finite-dimensional algebras
$D_n,A'_n\subseteq M_{d_n}(\mathbb C)$ such that
$A=\prod_{n\to\U}A'_n$ and $D=\prod_{n\to\U}D_n$.

Since $D\subseteq A$,
Lemma~\ref{lem:uniform-near-inclusion} gives
$D_n\subset_{2,\varepsilon_n}A'_n$ with
$\varepsilon_n\to_{\U}0$.  Apply
Proposition~\ref{prop:bratteli-correction} to this near inclusion and
use the common stable identification supplied by that proposition.
Denote the corrected algebras by $D_n\subseteq A_n$; they still
represent $D\subseteq A$.

Now choose unitary representatives $v_{\ell,n}$ of $u_\ell$ in the
corrected ambient dimensions.  Since $D\subseteq\{u_\ell\}'$, a
diagonal argument gives
$\sup_{d\in U(D_n)}\|[v_{\ell,n},d]\|_2\to_{\U}0$.
Indeed, otherwise one could choose $d_n\in U(D_n)$ for which the
commutators stay bounded away from zero; then $(d_n)_{n\to\U}\in D$
would not commute with $u_\ell$.  Haar averaging $v_{\ell,n}$ under
conjugation by $U(D_n)$ gives an operator in $D_n'$ at
$o_2(1)$-distance from $v_{\ell,n}$.  Its polar part and support
projections also lie in $D_n'$.  Complete the polar part blockwise on
its initial and final kernels to a unitary $u_{\ell,n}\in D_n'$.
Since $v_{\ell,n}$ is unitary, the polar-decomposition estimate gives
$\|u_{\ell,n}-v_{\ell,n}\|_2\to_{\U}0$.  Thus the new sequence still
represents $u_\ell$.
\end{proof}

\begin{proof}[Proof of Theorem~\ref{thm:normalization-intro}]
The group $G$ is finitely generated, and $P_H$ generates $G$.
Consequently there are $t_1,\ldots,t_m\in P_H$ such that
$G=\langle H,t_1,\ldots,t_m\rangle$.  Put
$A=\pi(H)'\cap\M$ and $D=\pi(G)'\cap\M$.
Apply Proposition~\ref{prop:coherent-models} and choose representatives
$u_{\ell,n}\in D_n'$ of $u_\ell=\pi(t_\ell)$.
The choice of generators gives
$D=A\cap\bigcap_{\ell=1}^m\{u_\ell\}'$.  Thus
Proposition~\ref{prop:coherent-models} supplies all the hypotheses
of Theorem~\ref{thm:no-drift} except the one-sided inclusions.

Let $x\in A$ and $h\in H$.  Put $k=t_\ell ht_\ell^{-1}\in H$.  Then
$[u_\ell^*xu_\ell,\pi(h)]=u_\ell^*[x,\pi(k)]u_\ell=0$.
Thus $u_\ell^*Au_\ell\subseteq A$.  By Theorem~\ref{thm:no-drift}, equality
holds for every $\ell$.  The algebra $A$ is fixed pointwise by
$\pi(H)$ and normalized by all $u_\ell$, hence it is normalized by
$\pi(G)$.
\end{proof}

\subsection{A conditional nonhyperlinear group}

We finish the proof of Theorem~\ref{thm:main-intro}.  

\begin{proof}[Proof of Theorem~\ref{thm:main-intro}]

Because $H$ is infranormal but not normal, there is a strict compressor
$t\in P_H$, that is, $tHt^{-1}<H$.
Indeed, if every element of $P_H$ normalized $H$, then the group
generated by $P_H$ would normalize $H$.

Put $P=G*_H G$.  Write $G_1$ and $G_2$ for its two factors, and write $g_i$ for
the copy of $g\in G$ in $G_i$.  Suppose that $P$ is hyperlinear and
choose an injective homomorphism $\pi\colon P\to U(\M)$.  Put
$A=\pi(H)'\cap\M$.  The element $c=t_1^{-1}t_2$ centralizes $H$:
for $h\in H$, the element $t_2ht_2^{-1}$ belongs to the common subgroup
$H$ and hence
$t_1^{-1}(t_2ht_2^{-1})t_1=h$.  Thus $\pi(c)\in A$.

Apply Theorem~\ref{thm:normalization-intro} to the first copy $G_1$.
It follows that $A$ is normalized by $\pi(G_1)$, and therefore
$\pi(t_2t_1^{-1})=\pi(t_1ct_1^{-1})\in A$.
Choose $h\in H\setminus tHt^{-1}$.  The commutator
$[h,t_2t_1^{-1}]$ is nontrivial: after collecting letters from the same
factor, its normal form is
$
 (ht_2)(t_1^{-1}h^{-1}t_1)t_2^{-1}.
$
A strict compressor cannot belong to $H$, so the first and third letters
lie outside $H$; the middle letter lies outside $H$ precisely because
$h\notin tHt^{-1}$.  The word is therefore reduced.  On the other hand,
$\pi(t_2t_1^{-1})\in A$ commutes with $\pi(h)$, so this nontrivial
commutator lies in the kernel of $\pi$, a contradiction.

Kun and the author proved in \cite[Theorem~E]{KT} that, for every prime power
$q$ and $r,d\geq3$, the group
$H=\EL_r(\mathbb F_q[x_1,\ldots,x_d])$ and the group
\[
 G=\EL_r(\mathbb F_q[x_1^{\pm1},\ldots,x_d^{\pm1}])
       \rtimes\operatorname{SL}_d(\mathbb Z)
\]
are residually finite Kazhdan groups and that $H$ is infranormal but not
normal in $G$.  
In fact, one may see a strict compressor explicitly.  If
$u_{12}=I+E_{12}\in\operatorname{SL}_d(\mathbb Z)$ and
$t=(1,u_{12})\in G$, then
$tHt^{-1}=\EL_r(\mathbb F_q[x_1,x_1x_2,x_3,\ldots,x_d])<H$.
For example, an elementary matrix with coefficient $x_2$ lies in $H$
but not in $tHt^{-1}$.
\end{proof}

\section*{Acknowledgments}

A first version of this paper circulated beginning of August 2026 and
created with the help of GPT 5.6 as an interactive proof assistant. The
proof was further simplified with the help of OpenAI's GPT Astra, which
was also used to assist in drafting and editing parts of
this manuscript.  All content was reviewed and substantially revised by
the author, who is responsible for
the final text.

\end{document}